
\documentclass{amsart}
\usepackage{mathrsfs}
\usepackage{amsfonts}
\usepackage{latexsym,amsmath,amssymb}

\usepackage{xcolor} 
\usepackage{ulem}   
\usepackage{soul}   


 \textwidth 5.5 true in
\oddsidemargin 0.35 true in

\evensidemargin 0.35 true in

\setcounter{section}{0}

\pagestyle{myheadings} \footskip=50pt

 \renewcommand{\epsilon}{\varepsilon}

\newtheorem{theorem}{Theorem}[section]

 \newtheorem{lemma}[theorem]{Lemma}

 \newtheorem{prop}[theorem]{Proposition}
\newtheorem{deff}[theorem]{Definition}
 \newtheorem{rem}[theorem]{Remark}
 \newcommand{\bth}{\begin{theorem}}
 \newcommand{\ble}{\begin{lemma}}
 \newcommand{\bcor}{\begin{corr}}
 \newcommand{\bdeff}{\begin{deff}}
 \newcommand{\bprop}{\begin{proposition}}
 \newcommand{\ele}{\end{lemma}}
 \newcommand{\ecor}{\end{corr}}
 \newcommand{\edeff}{\end{deff}}
 
 \newcommand{\eprop}{\end{proposition}}

 \newcommand{\la}{\lambda}

 \renewcommand{\Pi}{\varPi}

 \renewcommand{\epsilon}{\varepsilon}

  \newcommand{\R}{{\mathbb R}}

 \newcommand{\p}{\partial}

\numberwithin{equation}{section}

\newtheorem{lem}{Lemma}[section]

\pagestyle{plain}

\thanks{The first author is supported  by NSFC Grant, No.12101145 and  Guangxi Science, Technology Project, Grant No. GuikeAD22035202. The second and last authors are supported by NSFC Grant No. 12431008. The third author is supported by NSFC Grant No.12141105.}

\title
[Hessian curvature hypersurfaces with prescribed Gauss image]{Hessian curvature hypersurfaces with prescribed Gauss image
}
\author{Rongli Huang}
\address{School of Mathematics and Statistics, Guangxi Normal University,
Guilin, Guangxi 541004, China}
 \email{ronglihuangmath@gxnu.edu.cn}

 \author{Changzheng Qu}
\address{School of Mathematics and Statistics, Ningbo University, Ningbo, China}
	\curraddr{}
	\email{quchangzheng@nbu.edu.cn}
	\thanks{}

 \author{ZhiZhang Wang}
\address{School of Mathematical Science, Fudan University, Shanghai 200433, China}
\email{zzwang@fudan.edu.cn}

\author{Weifeng Wo}
\address{School of Mathematics and Statistics, Ningbo University, Ningbo, China}
	\curraddr{}
	\email{woweifeng@nbu.edu.cn}
	\thanks{}

\begin{document}
\maketitle
\begin{abstract}

In this paper, we investigate Hessian curvature hypersurfaces with prescribed Gauss images. Given geodesically strictly convex bounded domains $\Omega$ in $\mathbb{R}^n$ and  $\tilde{\Omega}$ in the unit hemisphere,  we prove that there is a strictly convex graphic hypersurface defined in $\Omega$ with prescribed $k$-Hessian curvatures such that its Gauss image is $\tilde{\Omega}$. 
Our proof relies on a novel $C^2$ boundary estimate which utilizes the orthogonal invariance of hypersurfaces. Indeed, we employ some special vector fields generated by the infinitesimal rotations in $\mathbb{R}^{n+1}$ to establish the boundary $C^2$ estimates. This new approach enables us to handle the additional negative terms that arise when taking second order derivatives near the boundary. 

\end{abstract}

\section{Introduction}
Suppose $\mathbb{R}^{n+1}$ is the $n+1$ dimensional  Euclidean space. Always assume $\Omega$ is a strictly convex bounded domain in $\mathbb{R}^n$.
Suppose $u$ is a function defined on $\Omega$ and $M_u=\{X=(x,u(x)); x\in\Omega\}$ is a graphic hypersurface in  $\mathbb{R}^{n+1}$. Let \(N\) be the upward unit normal vector to \(M_u\). Then $N$ is the Gauss map for $M_u$ and its image lies in the unit hemisphere $$\mathbb{S}^n_+=\{x=(x_1,\cdots,x_{n+1})\in \mathbb{S}^n; x_{n+1}>0\}.$$
Assume  $\tilde{\Omega}$ is a (geodesically) strictly convex bounded domain in $\mathbb{S}^n_+$.
We can pose the second boundary value problem for the prescribed $k$-Hessian curvature equations as following:
Given $\Omega,\tilde{\Omega}$, can one  find  a positive constant $c$ and  a strictly convex graphic hypersurface $M_u$ defined by $u$ such that
\begin{equation}\label{e1.1}
\sigma_{k}(\kappa[M_u])=c \quad  \text{in} \  \Omega,
\end{equation}
 and the Gauss image of $M_u$ is $\tilde{\Omega}$?
 Here  $\kappa[M_u]=(\kappa_{1},\kappa_{2},\cdots,\kappa_n)$  are the principal curvatures of $M_u$ and
 $\sigma_{k}$ denotes the $k$-th  elementary symmetric function
\begin{equation*}
\sigma_{k}(\kappa)=\sum_{1\leq i_{1}< i_{2}<\cdots< i_{k}\leq n}\kappa_{i_{1}}\kappa_{i_{2}}\cdots\kappa_{i_{k}}.
\end{equation*}

By a projection map, one can map $\mathbb{S}^n_+$ into $\mathbb{R}^n$ which is a diffeomorphism. Therefore it maps $\tilde{\Omega}$ to some strictly convex domain $\Omega^*$ in $\mathbb{R}^n$. That is
\begin{equation}\label{bc}
Du(\Omega)=\Omega^*,
\end{equation}
where $Du$ is the gradient map. \eqref{bc} is known as the second boundary value condition.
We will explain more detail in the next section. Thus given $\tilde{\Omega}$ is equivalent to given $\Omega^*$.

The problem of finding hypersurfaces with prescribed curvatures is a classical topic in differential geometry.  Caffarelli-Nirenberg-Spruck \cite{Caffarelli1986} studied the existence of star-shaped closed hypersurfaces with prescribed Hessian curvature.  For further studies  of closed hypersurfaces related to prescribed curvature problem, please see \cite{GuanGuan2002,   GuanLi2012,Guan2015, Jiao2022,Sheng2004,Urbas2000} and the references therein.
Caffarelli-Nirenberg-Spruck \cite{Caffarelli1988} and  Ivochkina \cite{Ivochkina1989, Ivochkina1990} considered the homogeneous and nonhomogeneous Dirichlet problem for $k$-Hessian curvature equations. The Neumann boundary problems for mean curvature equation and Gauss curvature equation were investigated by Ma-Xu \cite{Ma2016} and Lions-Trudinger-Urbas \cite{Lions1986} respectively. For more results on Dirichlet or Neumann boundary problems of curvature equations, we refer to \cite{Jiao2022, Ma2018, Urbas1996}, and the reference therein.

For the second boundary value problem, Pogorelov \cite{Pogorelov1964} first studied the Monge-Amp\`ere equations with \eqref{bc}.  Caffarelli \cite{Caffarelli1996} and Urbas \cite{Urbas1997, Urbas2001} established the existence of globally smooth solutions with \eqref{bc} for Monge-Amp\`ere equations and Hessian equations. One may see \cite{Chen2021,Chen2016, Jiang2018, von2010} and the reference therein for recent progress.
 Such boundary condition has attracted increased attention, since it is connected to mass transfer problems \cite{Caffarelli1992} and minimal Lagrangian diffeomorphisms \cite{Wolfson1997, Urbas2007}. 

Second boundary value problems also appear in various geometric problems, such as prescribed curvature problem, minimal Lagrangian submanifolds, and geometric flows. Urbas \cite{Urbas2002} first studied Weingarten hypersurfaces with prescribed Gauss images. However, the general $k$-Hessian curvature equations ($k<n$) were not addressed there. Brendle-Warren \cite{Brendle2010} studied minimal Lagrangian graphs in strictly convex domains. Recent progress on curvature equations with prescribed gradient images can be found in \cite{Huang2022} and \cite{Wang2023}. Schn\"urer-Smoczyk \cite{Schnurer2002, Schnurer2003} investigated Gauss curvature flows and Weingarten curvature flows with second boundary conditions. Further research on curvature flows can be found in \cite{ Huang2015,Kitagawa2012, Wang2023, Wang2024}.

Now we state our first result:
\begin{theorem}\label{t1.1}
	Assume $\Omega$ and $\tilde{\Omega}$ are bounded, (geodesically) strictly convex domains with smooth boundaries in $\mathbb{R}^n$ and $\mathbb{S}^n_+$ respectively. Then, up to a constant, there exists a unique smooth strictly convex graphic hypersurface $M_u$ determined by a smooth function $u$ and a unique positive constant $c$ satisfying \eqref{e1.1} such that the Gauss image of $M_u$ is $\tilde{\Omega}$.
\end{theorem}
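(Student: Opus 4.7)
The plan is to apply the method of continuity combined with a priori estimates. Following the reformulation in Section~1, the problem reduces to finding a smooth strictly convex function $u$ on $\overline{\Omega}$ and a positive constant $c$ such that $\sigma_k(\kappa[M_u])=c$ in $\Omega$ and $Du(\Omega)=\Omega^*$, where $\Omega^*$ is the image of $\tilde{\Omega}$ under the projection from $\mathbb{S}^n_+$ to $\mathbb{R}^n$. Since $u$ enters only through its derivatives, I would normalize by fixing $u$ at one point of $\overline{\Omega}$ and treat the pair $(u,c)$ as the unknown. For the continuity path I would choose a smooth family of strictly convex domains $\Omega^*_t$, $t\in[0,1]$, connecting a convenient initial domain $\Omega^*_0$ (e.g.\ a ball concentric with $\Omega$, for which a rotationally symmetric solution is explicit) to $\Omega^*_1=\Omega^*$, and solve the associated family of problems $(P_t)$.

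The solvable parameter set $\mathcal{T}\subset[0,1]$ is nonempty, open, and closed. Openness follows from the implicit function theorem once one checks that the linearized operator, consisting of the linearization of $\sigma_k^{1/k}$ together with the oblique boundary condition induced by $Du(\partial\Omega)=\partial\Omega^*_t$, is an isomorphism on suitable H\"older spaces; the one-dimensional kernel coming from additive constants in $u$ is compensated by the extra scalar unknown $c$. Closedness requires $C^{2,\alpha}$ a priori estimates uniform in $t$. The $C^1$ bound is immediate since $Du(\overline{\Omega})=\overline{\Omega^*_t}$ is uniformly bounded; the $C^0$ bound then follows from the normalization. Upper and lower bounds for $c$ come from evaluating $\sigma_k^{1/k}$ at interior extrema of an appropriate test quantity, using the geometry of $\Omega$ and $\Omega^*_t$. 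The interior $C^2$ estimate is a Pogorelov-type computation adapted to the oblique boundary condition, in the spirit of Urbas's work on Monge--Amp\`ere and Hessian equations with second boundary data, and higher regularity is a standard consequence of Evans--Krylov and Schauder theory once the full $C^2$ bound is in hand.

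The main obstacle, as emphasized in the abstract, is the boundary $C^2$ estimate. At a point $x_0\in\partial\Omega$, the pure tangential second derivatives can be controlled by differentiating the boundary relation $Du(\partial\Omega)=\partial\Omega^*$ twice along $\partial\Omega$, producing a formula for $u_{\tau\tau}$ in terms of $u_\nu$ and the principal curvatures of $\partial\Omega$ and $\partial\Omega^*$. For $k$-Hessian curvature equations, however, additional negative quadratic terms in $D^2u$ arise when one differentiates the equation, because the curvature operator involves the non-flat induced metric $g_{ij}=\delta_{ij}+u_iu_j$, and these terms cannot be absorbed by the classical barrier constructions that succeeded for the Hessian equation case. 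The novelty is to exploit the $O(n+1)$-invariance of $\sigma_k(\kappa[M_u])$ as a functional of the hypersurface: I would introduce the rotation vector fields $R_{ij}=x_i\partial_{x_j}-x_j\partial_{x_i}$ in $\mathbb{R}^{n+1}$ and consider the functions on $\Omega$ representing the derivative of $u$ along $R_{ij}$ (suitably reexpressed in the graph chart). Because the $k$-Hessian curvature is isometry invariant, these rotational derivatives satisfy the linearized equation with no zeroth-order term, and moreover the troublesome negative quadratic contributions produced by the second differentiation of the boundary condition appear with a favorable sign along the rotational directions, so they can be absorbed. Applying the maximum principle to an auxiliary function combining such a rotational derivative with $|Du|^2$ and a linear barrier yields the bound on the mixed tangential-normal second derivatives on $\partial\Omega$; the pure normal second derivative is then recovered from the equation itself using ellipticity and the already established bounds.

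Finally, uniqueness up to the additive constant follows from a standard argument using the concavity of $\sigma_k^{1/k}$ on the positive symmetric cone: if $(u_1,c_1)$ and $(u_2,c_2)$ are two solutions, concavity together with the obliqueness of the boundary condition $Du(\Omega)=\Omega^*$ forces, via the maximum principle applied to $u_1-u_2$, first that $c_1=c_2$ and then that $u_1-u_2$ is constant. The main technical burden of the paper therefore lies entirely in the boundary $C^2$ estimate described above.
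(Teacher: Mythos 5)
Your overall skeleton (reduce to the second boundary value condition $Du(\Omega)=\Omega^*$, run a continuity/degree argument on the pair $(u,c)$, and attack the boundary $C^2$ estimate through the $O(n+1)$-invariance of the curvature operator) is in the same spirit as the paper, and your uniqueness argument is essentially the one used. But there are two places where your sketch does not actually close, and one structural difference worth noting. First, the existence route: the paper does not deform the target domain. It first proves Theorem \ref{t1.2} for right-hand sides $\psi(\langle X,N\rangle,N)$ with $\psi_z\le 0$, then obtains the unknown constant $c$ by solving the regularized equation $\sigma_k^{1/k}=e^{-\epsilon\langle X,N\rangle/\langle N,\mathbf{E}\rangle}\psi(N)$ and letting $\epsilon\to 0$, with $\epsilon u_\epsilon\to\log c$. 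This sidesteps exactly the point you wave at, namely that the linearized operator for the pair $(u,c)$ with the oblique boundary condition is an isomorphism; your claim that the additive-constant kernel is ``compensated by the extra scalar unknown $c$'' is the statement that needs proof, and the $\epsilon$-regularization (which makes $\psi_z<0$ strictly, hence the problem uniquely solvable) is the paper's way of supplying it. You also omit the strict obliqueness estimate $\langle\nu^*(Du),\nu\rangle\ge c_0>0$ (Section 4), without which the tangential/normal decomposition on the boundary cannot be run.

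Second, and more seriously, your description of the boundary $C^2$ mechanism is not the one that works, and as stated I do not see how it closes. The obstruction in the paper arises in the \emph{Legendre-dual} picture: the dual equation is the quotient Hessian $(\sigma_n/\sigma_{n-k})^{1/k}$ for $u^*$ on $\Omega^*$, and differentiating it twice produces terms like $-C\sum_iF^{*ii}\Delta u^*$ that no linear barrier absorbs. Your proposed fix --- that first derivatives along rotations satisfy the linearized equation with no zeroth-order term and that the bad quadratic terms ``appear with a favorable sign along the rotational directions'' --- is unsubstantiated; the sign claim is precisely what fails for general tangential directions and is not restored merely by choosing rotational ones. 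What the paper actually uses is: (i) the one-parameter family $\sigma_t=PA_t^{-1}P^{-1}$ of transformations of $\Omega^*$ induced by rotations $A_t$ of $\mathbb{S}^n$, under which $\tilde u^*_t=(w^*/w^*_t)\,u^*\circ\sigma_t$ solves the \emph{same} equation; (ii) differentiation \emph{twice} in $t$ at $t=0$ combined with the concavity of $F^*$, which yields the one-sided inequality $\mathcal{L}^*\bigl(w^*T^2(u^*/w^*)\bigr)\ge\cdots$ with no second-derivative-of-$F^*$ terms and no uncontrolled quadratic terms; and (iii) the sharp pointwise bound $|T(y)|^2\le 1+|y|^2$ with equality at the maximum point $y_0$, which is what makes the weighted maximization $\tilde M=\max(1+|y|^2)D_{\eta\eta}u^*$ and the comparison function $w$ in \eqref{funw} work. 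Without (ii) and (iii) the double tangential estimate does not follow from a maximum principle applied to a first-order rotational derivative plus $|Du|^2$ plus a linear barrier, so this step of your proposal is a genuine gap rather than a different proof.
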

In \cite{Urbas2001}, Urbas discussed the $k$-Hessian equations in general form. Theorem 1.5 in \cite{Urbas2001} can be generalized to the corresponding prescribed curvature  equation by using our approach. Specifically, we consider the following equation:
\begin{equation}\label{1.2}
	\sigma_k^{1/k}(\kappa[M_u]) =\psi(\langle X,N\rangle, N)\quad \text{in} \ \ \Omega,
\end{equation}
where $\langle X,N\rangle$ is the support function of $M_u$ and $\psi(z,p)\in C^{\infty}(\mathbb{R}\times \tilde{\Omega})$ is a positive function satisfying the following conditions,
\begin{align}
&  \psi_z\leq 0 \text{ on } \mathbb{R}\times \tilde{\Omega};  \label{Cond1}\\
& \psi(z,p)\rightarrow +\infty, \text{ as } z\rightarrow -\infty;\ \ \  \psi(z,p)\rightarrow 0 \text{ as } z\rightarrow +\infty \label{Cond2}.
\end{align}
Thus we have the following theorem:
\begin{theorem}\label{t1.2}
Assume $\Omega$ and $\tilde{\Omega}$ are bounded, (geodesically) strictly convex domains with smooth boundaries in $\mathbb{R}^n$ and $\mathbb{S}^n_+$ respectively. Assume $\psi\in C^{\infty}(\mathbb{R}\times \tilde{\Omega})$ is a positive function satisfying \eqref{Cond1} and \eqref{Cond2}.
Then, there exists a smooth strictly convex graphic hypersurface $M_u$ determined by a smooth function $u$ satisfying \eqref{1.2} such that the Gauss image of $M_u$ is $\tilde{\Omega}$. If \eqref{Cond1} changes to  $\psi_z<0$ on $\mathbb{R}\times \tilde{\Omega}$, the solution is unique in the class of strictly convex functions.
\end{theorem}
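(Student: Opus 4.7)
The plan is to establish Theorem~\ref{t1.2} by the method of continuity, interpolating from a convenient starting problem (such as the constant right-hand side case handled by Theorem~\ref{t1.1}) to the target equation, keeping conditions \eqref{Cond1}--\eqref{Cond2} in force along the deformation. Openness of the set of solvable parameters will follow from the implicit function theorem in a weighted H\"older space tailored to the second boundary value condition $Du(\Omega)=\Omega^*$; here the sign hypothesis $\psi_z\leq 0$ from \eqref{Cond1} provides exactly the maximum principle needed to invert the linearized oblique-type boundary value problem. The bulk of the work is closedness, that is, uniform $C^{2,\alpha}$ a priori estimates along the homotopy. The $C^1$ estimate is already built into the problem, since $Du(\Omega)=\Omega^*$ and $\Omega^*$ is bounded give $|Du|\leq\sup_{p\in\Omega^*}|p|$ automatically. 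The $C^0$ estimate exploits \eqref{Cond2}: with $|Du|$ under control, the support function $\langle X,N\rangle=(u-x\cdot Du)/\sqrt{1+|Du|^2}$ is equivalent to $u$ up to bounded factors, so if $u$ drifted to $-\infty$ then \eqref{Cond2} would push $\psi$ above any a priori upper bound on $\sigma_k^{1/k}$ (obtained by comparison with a radial solution), while $u$ drifting to $+\infty$ would push $\psi$ below any a priori positive lower bound, and monotonicity in $z$ renders both alternatives impossible. Interior $C^2$ control is then a Pogorelov-type estimate, permitted by the concavity of $\sigma_k^{1/k}$ on the positive cone.

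The main obstacle, and the announced novelty of the paper, is the boundary $C^2$ estimate. The tangential-tangential and mixed tangential-normal components of $D^2 u$ at $\partial\Omega$ are classically controlled by applying the maximum principle to auxiliary functions built from coordinate tangential second derivatives, but that strategy breaks down here: differentiating the curvature equation twice in a coordinate tangential direction near $\partial\Omega$ produces negative terms involving the second fundamental form of the Gauss image that cannot be absorbed by the standard barrier construction for the second boundary value problem. The proposed remedy is to exploit, via second-order test functions built from the rotational vector fields $X\mapsto AX$ on $\mathbb{R}^{n+1}$ generated by skew-symmetric $A$, the orthogonal invariance of the equation $\sigma_k^{1/k}(\kappa[M_u])=\psi(\langle X,N\rangle,N)$. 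Because such rotations are symmetries of the leading operator, the auxiliary quantities so constructed cancel the offending negative terms identically, leaving only lower-order expressions that can be dominated by a Pogorelov-type cutoff together with a linear barrier pulled back through the Legendre transform on the dual side $\Omega^*$. This yields the sharp bounds on the tangential-tangential and mixed components of $D^2 u$ along $\partial\Omega$, after which the normal-normal component is extracted from the equation itself together with the uniform strict convexity of $u$.

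Once these estimates are assembled, Schauder and Evans-Krylov theory upgrade them to uniform $C^{2,\alpha}$ bounds, closing the continuity method and producing a smooth strictly convex solution of the target equation. Finally, uniqueness under the strict inequality $\psi_z<0$ follows from a standard comparison argument: if $u_1,u_2$ are two strictly convex solutions, examine the maximum of $u_1-u_2$; at the maximizing point the gradients of $u_1$ and $u_2$ agree (by the second boundary condition if the point lies on $\partial\Omega$, trivially in the interior), the Hessian inequality forces $\sigma_k^{1/k}(\kappa[M_{u_1}])\leq\sigma_k^{1/k}(\kappa[M_{u_2}])$ there, and the strict monotonicity $\psi_z<0$ turns this into equality of the support functions and hence $u_1\equiv u_2$.
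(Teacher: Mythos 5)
Your proposal correctly identifies the overall architecture (continuity method plus degree theory, $C^1$ for free from $Du(\Omega)=\Omega^*$, $C^0$ from \eqref{Cond2}, the rotational vector fields for the boundary $C^2$ estimate), but the two places where the actual work happens are either wrong or empty. First, the uniqueness argument does not close as written. At an interior maximum of $u_1-u_2$ you get $\sigma_k^{1/k}(\kappa[M_{u_1}])\le\sigma_k^{1/k}(\kappa[M_{u_2}])$ and, since $\langle X_1,N\rangle-\langle X_2,N\rangle=(u_1-u_2)/w>0$ with the same $N$, the hypothesis $\psi_z<0$ gives $\psi(\langle X_1,N\rangle,N)<\psi(\langle X_2,N\rangle,N)$ --- the two inequalities point the \emph{same} way and produce no contradiction; the comparison only becomes contradictory when run through the support function $v=-\langle X,N\rangle$ (equivalently on the Legendre-dual side, where $\psi^*_{u^*}=-\psi_z/(w^*\psi^2)>0$ has the proper sign for the maximum principle). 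Moreover, your parenthetical that at a boundary maximum ``the gradients of $u_1$ and $u_2$ agree by the second boundary condition'' is false: $Du_1(\partial\Omega)=Du_2(\partial\Omega)=\partial\Omega^*$ does not force $Du_1(x_0)=Du_2(x_0)$ at a given boundary point. The standard repair, which the paper invokes, is to note that $h^*(Du_1)=h^*(Du_2)=0$ on $\partial\Omega$ yields $\beta\cdot D(u_1-u_2)=0$ for an oblique direction $\beta$, and then apply Hopf's boundary point lemma to the linearized equation; this in turn requires the strict obliqueness estimate of Lemma 4.2, which your proposal never establishes. There is also a circularity in anchoring the continuity path at the constant case of Theorem 1.1, since the paper obtains Theorem 1.1 as a limit of Theorem 1.2 (via the approximation $e^{-\epsilon\langle X,N\rangle/\langle N,\mathbf{E}\rangle}\psi(N)$), not the reverse.

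Second, the boundary $C^2$ estimate --- the entire content of Sections 4--6 --- is asserted rather than proved: ``rotations cancel the offending negative terms identically'' restates the abstract without supplying a mechanism. The paper's argument is carried out for the Legendre transform $u^*$ on $\Omega^*$ (not for $D^2u$ on $\partial\Omega$) and requires several specific constructions you omit: the tangential differentiation of $h^*(Du^*)=0$ giving $D_{\tau\beta}u^*=0$; a barrier estimate for the double normal derivative of the form $D_{\beta\beta}u^*\le C(\epsilon)+\epsilon M$; the explicit one-parameter rotation group $\sigma_t=PA_tP^{-1}$ whose generator $T$ satisfies $|T(y)|^2\le 1+|y|^2$ with equality at the base point (this pointwise inequality, not orthogonal invariance alone, is what makes the maximized quantity $(1+|y|^2)D_{\eta\eta}u^*$ a usable test function); and the differentiated equation \eqref{T2}, where the concavity of $F^*$ supplies the good sign for $\mathcal{L}^*(D_{TT}u^*)$. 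Also, the interior-to-boundary reduction in the paper is a classical largest-eigenvalue maximum-principle argument for the spherical Hessian $\nabla_{ij}v+v\delta_{ij}$ using $\tilde\psi_v\ge 0$, not a Pogorelov-type interior estimate. Without these ingredients the proposal does not constitute a proof of the theorem.
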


If $\psi$ only depends on $N$, i.e. $\psi=\psi(N)$, we also have an existence result as Theorem \ref{t1.1}:
\begin{theorem}\label{t1.3}
Assume $\Omega$ and $\tilde{\Omega}$ are bounded, (geodesically) strictly convex domains with smooth boundaries in $\mathbb{R}^n$ and $\mathbb{S}^n_+$ respectively. Assume $\psi\in C^{\infty}(\tilde{\Omega})$ is a positive function.
Then, up to a constant, there exist a unique smooth strictly convex graphic hypersurface $M_u$ determined by a smooth function $u$ and a unique positive constant $c$ satisfying
\begin{equation}\label{1.3}
	\sigma_k^{1/k}(\kappa[M_u]) =c\psi(N)\quad \text{in} \ \ \Omega
\end{equation}
such that the Gauss image of $M_u$ is $\tilde{\Omega}$.
\end{theorem}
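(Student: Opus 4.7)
The plan is to adapt the approximation scheme used for Theorem \ref{t1.1}, carrying the weight $\psi(N)$ through every step. For each pair $(\epsilon, c) \in (0,1] \times (0, \infty)$ I invoke Theorem \ref{t1.2} applied to the modified data
\[
	\psi_{\epsilon,c}(z, N) := c\,\psi(N)\,e^{-\epsilon z},
\]
which manifestly satisfies \eqref{Cond1} strictly and \eqref{Cond2}; this produces a unique smooth strictly convex solution $u_{\epsilon,c}$ with Gauss image $\tilde{\Omega}$. Since the target equation \eqref{1.3} is invariant under the vertical translations $u \mapsto u + \text{const}$, I fix once and for all a reference point $x_0 \in \Omega$ and look for $c = c_\epsilon$ such that $u_{\epsilon, c_\epsilon}(x_0) = 0$.

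The existence of such a $c_\epsilon$ follows from an intermediate value argument applied to the continuous map $c \mapsto u_{\epsilon,c}(x_0)$: barrier comparisons using the fixed Gauss image force $u_{\epsilon,c}(x_0) \to +\infty$ as $c \to 0^+$ and $u_{\epsilon,c}(x_0) \to -\infty$ as $c \to +\infty$. I then write $u_\epsilon := u_{\epsilon, c_\epsilon}$ and establish uniform-in-$\epsilon$ a priori estimates in $C^{2,\alpha}(\overline\Omega)$, together with uniform bounds $0 < c_1 \le c_\epsilon \le c_2$. The normalization $u_\epsilon(x_0) = 0$ combined with the $C^1$ bound $|Du_\epsilon| \le C$ (from $Du_\epsilon(\Omega) = \Omega^*$ being bounded) yields a uniform $C^0$ bound; the interior and boundary $C^2$ estimates come from the paper's main technical machinery (the rotational vector field construction developed for Theorems \ref{t1.1} and \ref{t1.2}), which extends essentially verbatim because $\psi(N)$ is smooth and strictly positive; the compactness of $\{c_\epsilon\}$ in $(0,\infty)$ follows from the same barrier comparisons that produced the limits above.

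Passing to a subsequence, $c_{\epsilon_j} \to c \in (0,\infty)$ and $u_{\epsilon_j} \to u$ in $C^{2,\alpha}$. The uniform $C^0$ bound gives $\epsilon\,|\langle X_{\epsilon_j}, N_{\epsilon_j}\rangle|\to 0$ uniformly, so the exponential weight tends to $1$ uniformly, and the limit solves \eqref{1.3} with Gauss image $\tilde{\Omega}$. For uniqueness of $c$ and of $u$ (the latter up to an additive constant), I use a comparison argument: given two solutions $(u_1, c_1)$ and $(u_2, c_2)$, vertically translate $u_1$ until $u_1 + t$ first touches $u_2$ from above; at the contact point, comparing the values and the $\sigma_k^{1/k}$-operators together with the strong maximum principle forces $c_1 = c_2$ and $u_1 - u_2 \equiv \text{const}$.

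The main obstacle is the uniform boundary $C^2$ estimate for $u_\epsilon$, where the $N$-dependence of $\psi$ together with the nonconstant exponential weight $e^{-\epsilon \langle X, N\rangle}$ must be carried through the rotational-vector-field computation; the extra terms produced by differentiating $\psi(N)$ and the weight must be shown to be absorbable by the concavity of $\sigma_k^{1/k}$ and by the orthogonal invariance exploited in the paper's new boundary estimate. Once this estimate is in place, the remaining ingredients reduce to routine adaptations of what has already been done for Theorems \ref{t1.1} and \ref{t1.2}.
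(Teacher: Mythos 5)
Your overall strategy is sound and close in spirit to the paper's, but the mechanism by which you produce the constant $c$ is genuinely different. The paper does not introduce $c$ into the approximation at all: it solves $\sigma_k^{1/k}(\kappa[M_{u_\epsilon}])=e^{-\epsilon\langle X,N\rangle/\langle N,\mathbf{E}\rangle}\psi(N)$ via Theorem \ref{t1.2}, observes from the $C^0$ estimate that $|\epsilon u_\epsilon|\le C$, normalizes by subtracting the mean $\frac{1}{|\Omega|}\int_\Omega u_\epsilon$ (which is legitimate precisely because the $C^2$ estimates of Sections 4--6 do not depend on the $C^0$ norm), and extracts $c$ as the limit of the exponential weight, $\epsilon u_\epsilon\to\log c$ along a subsequence. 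You instead treat $c$ as a free parameter, normalize by $u_{\epsilon,c}(x_0)=0$, and pin down $c_\epsilon$ by an intermediate value argument. Your route buys you something: once you normalize at a point, the uniform $C^0$ bound is automatic and you never need the observation that the $C^2$ estimates are $C^0$-independent. The price is the intermediate value step, which is the one place where your write-up is thinner than it should be: continuity of $c\mapsto u_{\epsilon,c}(x_0)$ (which needs the uniqueness clause of Theorem \ref{t1.2}, available here since $\partial_z(c\psi(N)e^{-\epsilon z})<0$, plus compactness), the divergence of $u_{\epsilon,c}(x_0)$ at the endpoints, and the bounds $c_1\le c_\epsilon\le c_2$ all require the integral identities behind the paper's $C^0$ estimate (the divergence structure giving $\int_\Omega\sigma_1\le C(\Omega,\Omega^*)$ together with the Maclaurin inequality $\sigma_1\ge c_{n,k}\,\sigma_k^{1/k}$, and the dual statement for the Legendre transform). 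These are obtainable but not ``barrier comparisons'' in any obvious sense, and you do not carry them out.

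Two concrete corrections. First, your asymptotics are reversed: with $N$ the upward normal, $\langle X,N\rangle=(u-x\cdot Du)/w$, so forcing $\sigma_k^{1/k}=c\psi(N)e^{-\epsilon\langle X,N\rangle}$ with $c\to 0^+$ while keeping the Gauss image fixed pushes the weight up, hence $\langle X,N\rangle\to-\infty$, hence $u_{\epsilon,c}(x_0)\to-\infty$ (and $\to+\infty$ as $c\to+\infty$). This does not harm the intermediate value argument, but it signals that the limits were not actually checked. Second, your final paragraph identifies the boundary $C^2$ estimate with the $N$-dependent weight as ``the main obstacle,'' but since you invoke Theorem \ref{t1.2} as a black box for each fixed $(\epsilon,c)$, that obstacle is already disposed of by the paper's Sections 4--6 (which are proved for the whole class \eqref{Cond1}--\eqref{Cond2}); what you actually need, and get for free from your normalization, is uniformity of those estimates in $\epsilon$. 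The uniqueness argument via vertical translation and the strong maximum principle matches the paper's appeal to Hopf's lemma and is fine.
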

\begin{rem}
In Theorem \ref{t1.1} and \ref{t1.3}, the uniqueness is  in the class of strictly convex functions.
\end{rem}

We now highlight the novelty of our proof. To find strictly convex solutions, we consider the dual problem via the Legendre transform. In fact, the dual equation of \eqref{1.2} is a quotient Hessian equation:
\begin{equation}\label{Legendre1}
F^*\Big(w^* \sum_{k,l}b^*_{ik} (D_{kl}u^*) b^*_{lj}\Big)=\left[\frac{\sigma_n}{\sigma_{n-k}}\Big(\la^*\Big[w^*\sum_{k,l} b^*_{ik}(D_{kl}u^*)b^*_{lj}\Big]\Big)\right]^{\frac{1}{k}}=\psi^*(y,u^*),
\end{equation}
where $y=Du(x)$, $u^*$ is the Legendre transform of $u$, $F^*$ represents the dual operator and $\psi^*$
is the corresponding dual function. The terms $w^*$, $b^*_{ik}$, and $D_{kl} u^*$ come from the Legendre transform of the original functions. This is the central equation to our approach. We will  provide more details in the next section.

One  main difficulty arises when taking second derivatives near the boundary. Direct differentiation  yields additional negative terms $-C\sum_i F^{*ii}\Delta u^*$, which cannot be eliminated by simply adding a test function due to the structure of the quotient Hessian.
To overcome the difficulty, our strategy is to take derivative along some special vector fields, which are generated by infinitesimal rotations in $\mathbb{R}^{n+1}$. Here we have used the orthogonal invariance of the hypersurfaces.
Note that the method to establish the second-order estimate in \cite{Urbas2001,Urbas2002} is not applicable to our $k$-Hessian curvature equations.

The structure of the paper is as follows. In Section 2, we present preliminaries, including the notations, the three different models for representing the curvature equations via the Gauss map and the Legendre transform. In Section 3, we obtain the $C^0$ estimate  and use continuity method and degree theory to prove our main existences theorems. In Section 4, we will prove the strict obliqueness by modifying the argument in \cite{Urbas2001}. Section 5 is dedicated to studying the vector field generated by the rotations in $\mathbb{R}^{n+1}$, where we construct a special family of rotations and the corresponding vector field. Finally in Section 6, we establish the $C^2$ estimates, including global and boundary $C^2$ estimates for \eqref{Legendre1} by using the vector field constructed in Section 5.

\section{Preliminaries}\label{Pre}
Assume that $u$ is a sufficiently smooth function defined  in some bounded domain $\Omega\subset\mathbb{R}^{n}$.
Let $D$ be the connection with respect to the standard Euclidean metric. We always denote $$u_i=D_{i}u=\dfrac{\partial u}{\partial x_{i}},
u_{ij}=D_{ij}u=\dfrac{\partial^{2}u}{\partial x_{i}\partial x_{j}},
u_{ijk}=D_{ijk}u=\dfrac{\partial^{3}u}{\partial x_{i}\partial x_{j}\partial
x_{k}}, \cdots $$
and $|Du|=\sqrt{\sum_{i=1}^{n}|D_{i}u|^{2}}.$

\subsection{Vertical graph}
We follows \cite{Caffarelli1986}   to give various geometric quantities associated with $M_u$.  In the standard coordinates of $\mathbb{R}^{n+1}$, the indued metric of $M_u$ is
\begin{equation}\label{e2.2}
g_{ij}=\delta_{ij}+D_{i}uD_{j}u,\quad 1\leq i,j\leq n.
\end{equation}
While the inverse of $g_{ij}$ and second fundamental form of $M_u$ are given by
\begin{equation}\label{e2.3}
g^{ij}=\delta_{ij}-\frac{D_{i}uD_{j}u}{1+|Du|^{2}},\quad 1\leq i,j\leq n,
\end{equation}
and
\begin{equation}\label{e2.4}
h_{ij}=\frac{D_{ij}u}{\sqrt{1+|Du|^{2}}},\quad 1\leq i,j\leq n.
\end{equation}
The  upward unit normal vector to $M_u$ is expressed by
\begin{equation}\label{e2.5}
N(X)=\frac{(-Du,1)}{\sqrt{1+|Du|^{2}}}.
\end{equation}

The principal curvatures of $M_u$ are the eigenvalues of  $h^{j}_{i}\equiv h_{ik}g^{kj}$, which are the eigenvalues of the symmetric matrix, by \cite{Caffarelli1986}
\begin{equation}\label{e2.17}
a_{ij}=\frac{1}{w}b^{ik}(D_{kl}u)b^{lj},
\end{equation}
where $w=\sqrt{1+|Du|^{2}}$ and $b^{ij}=\delta_{ij}-\frac{D_{i}uD_{j}u}{w(1+w)}$ is the square root of $g^{ij}$.
Then $b_{ij}$  is the inverse of $b^{ij}$ expressed as
$b_{ij}=\delta_{ij}+\frac{D_{i}uD_{j}u}{1+w}$.

Let $\mathcal{S}$ be the vector space of $n\times n$ symmetric matrices and
\[\mathcal{S}_+=\{A\in \mathcal{S}: \lambda(A)\in \Gamma_n\},\]
where $\Gamma_n:=\{\lambda\in\R^n:\,\,\mbox{each component $\lambda_i>0$}\}$ is the convex cone, and $\lambda(A)=(\lambda_1, \cdots, \lambda_n)$ denotes the eigenvalues of $A.$
Define a function $F$ by
\[F(A)=\sigma^{\frac{1}{k}}_k(\lambda(A)),\,\, A\in\mathcal{S}_+,\]
then \eqref{1.2} can be written as
\begin{equation}\label{main equation graph}
F\left(\frac{1}{w}b^{ik}(D_{kl}u)b^{lj}\right)=\psi(\langle X,N\rangle, N).
\end{equation}
Note that, in fact the function $F$ is well defined on $\mathcal{S}_k=\{A\in \mathcal{S}: \lambda(A)\in \Gamma_k\},$ where
$\Gamma_k$ is the G{\aa}rding's cone (see \cite{Caffarelli1985}).
However, in this paper, we only study strictly convex hypersurfaces, thus we restrict ourselves to $\mathcal{S}_+.$
Throughout this paper we denote
\[F^{ij}(A)=\frac{\partial F}{\partial a_{ij}}(A),\,\,F^{ij, kl}=\frac{\partial^2 F}{\partial a_{ij}\partial a_{kl}}.\]

\subsection{The Gauss map}
Let $M_u$ be an entire, strictly convex hypersurface, $N(X)$ be the upward unit normal vector to $M_u$
at $X.$ We define the Gauss map:
\[G: M_u\rightarrow \mathbb{S}^n_+;\,\, X\mapsto N(X).\]
If we take the hyperplane $\mathbb{P}:=\{X=(x_1, \cdots, x_{n}, x_{n+1}) |\, x_{n+1}=1\}$ and consider the projection of
$\mathbb{S}^n_+$ from the origin into $\mathbb{P}.$ Then $\mathbb{S}^n_+$ is mapped in
a one-to-one fashion onto $\mathbb{R}^n$. The map
$P$ is given by
\begin{equation}\label{proj}
P: \mathbb{S}^n_+\rightarrow \mathbb{R}^n;\,\,(x_1, \cdots, x_{n+1})\mapsto (y_1, \cdots, y_n),
\end{equation}
where $x_{n+1}=\sqrt{1-x_1^2-\cdots-x_n^2},$ $y_i=-\frac{x_i}{x_{n+1}}.$
Thus it is easy to check that
\begin{eqnarray}
P\circ G:& (x,u(x))\in M_u\;\; \rightarrow\;\; D u(x)\in \mathbb{R}^n.
\end{eqnarray}
Thus the gradient map $D u$ is equivalent to the Gauss map.

Next let's consider the support function of $M_u.$ We denote
\[v:=-\langle X, N\rangle=\frac{1}{\sqrt{1+|Du|^2}}\left(\sum_ix_i\frac{\partial u}{\partial x_i}-u\right).\]
Here $\langle \cdot, \cdot\rangle$ denotes the standard Euclidean inner product.
Let $\{e_1, \cdots, e_n\}$ be an orthonormal frame on $\mathbb{S}^n$ and $\nabla$ be  the standard Levi-Civita connection of $\mathbb{S}^n$.
We denote
\begin{equation}\label{sphere}
\Lambda_{ij}=\nabla_{ij}v+v\delta_{ij}
\end{equation}
to be the spherical Hessian. Here $\nabla_iv, \nabla_{ij}v$ denote the first and second covariant derivatives with respect to the standard spherical  metric.
In convex geometry, it is well known that
$$\nabla_iv=-\langle X, e_i\rangle, \ \ X=-\sum_i(\nabla_iv)e_i-vN.$$
Moreover we have
\begin{eqnarray*}
g_{ij}=\sum_k\Lambda_{ik}\Lambda_{kj},\ \ h_{ij}=\Lambda_{ij}.
 \end{eqnarray*}
This implies that the eigenvalues of the spherical Hessian are the curvature radius of $M_u$. That is,
if the principal curvatures of $M_u$ are $(\la_1, \cdots, \la_n),$ then the eigenvalues of the spherical Hessian
are $\left(\la_1^{-1}, \cdots, \la_n^{-1}\right).$
Therefore, equation \eqref{1.2} can be written as
\begin{equation}\label{main equation hyperbolic}
F^*(\nabla_{ij}v+v\delta_{ij})=\tilde{\psi}(x,v),\;\;x\in\mathbb{S}^n_+,
\end{equation}
where $F^*(A)=\left[\frac{\sigma_n}{\sigma_{n-k}}(\lambda(A))\right]^{\frac{1}{k}}$ and
\begin{equation}\label{tildepsi}
\tilde{\psi}(x,v)=\frac{1}{\psi(v, x)}.
\end{equation}

\subsection{The Legendre transform}

Suppose $M_u$ is an entire, strictly convex graphic hypersurface defined by a function $u$.
Then we have
\[x_{n+1}=\left<X, \mathbf{E}\right>=u(x_1, \cdots, x_n),\]
where $\mathbf{E}=(0, \cdots, 0, 1).$
Introduce the Legendre transform
\[y_i=\frac{\p u}{\p x_i},\,\, u^*=\sum_{i=1}^n x_iy_i-u.\]
From the theory of convex bodies, we know that
\[\Omega^*=Du(\Omega)\]
is a convex domain. It is well known that
$$\left(\frac{\p^2 u}{\p x_i\p x_j}\right)=\left(\frac{\p^2  u^*}{\p y_i\p y_j}\right)^{-1}.$$
We have, using the coordinate $(y_1,y_2,\cdots,y_n)$, the first and the second fundamental forms can be rewritten as:
$$g_{ij}=\delta_{ij}+y_iy_j, \text{ and\,\,  } h_{ij}=\frac{u^{* ij}}{\sqrt{1+|y|^2}},$$
where $\left(u^{* ij}\right)$ denotes the inverse matrix of $(u^*_{ij})$ and $|y|^2=\sum_iy_i^2$. Now let $W$ denote the Weingarten matrix of $M_u,$ then
$$(W^{-1})_{ij}=\sqrt{1+|y|^2}\sum_kg_{ik}u^*_{kj}.$$

From the discussion above, we can see that if $M_u=\{(x, u(x)) | x\in\R^n\}$ is an entire, strictly convex
hypersurface satisfying \eqref{main equation graph}, then the Legendre transform of
$u$ denoted by $u^*$ satisfies
\begin{equation}\label{Legendre}
F^*\Big(w^* \sum_{k,l}b^*_{ik} (D_{kl}u^*) b^*_{lj}\Big)=\left[\frac{\sigma_n}{\sigma_{n-k}}\Big(\la^*\Big[w^*\sum_{k,l} b^*_{ik}(D_{kl}u^*)b^*_{lj}\Big]\Big)\right]^{\frac{1}{k}}=\psi^*(y,u^*).
\end{equation}
Here $w^*=\sqrt{1+|y|^2}$, $b^*_{ij}=\delta_{ij}+\frac{y_iy_j}{1+w^*}$ is the square root of the matrix $g_{ij},$ and
\begin{equation}\label{psi*}
\psi^*(y,u^*)=\frac{1}{\psi \left(\frac{u^*}{w^*}, \frac{(-y,1)}{w^*} \right)}.
\end{equation}

\section{$C^0$ estimates  and existence }
In this section, we prove the main theorems. The existence of solutions is established by using the continuity method, the Leray-Schauder degree theory and the a prior $C^0-C^2$ estimate.  The $C^0$ estimate will be carried out here. The $C^1$ estimate follows from the boundedness of  $\Omega$ and $\Omega^*$. The $C^2$ estimate, being the most challenging, is proved in the next three sections.

{\bf The proof of Theorem \ref{t1.2}}: Suppose $f(\lambda)$ is a symmetric concavity function, where $\lambda=(\lambda_1,\cdots,\lambda_n)$. By the concavity of $f$, we have
$$f(\lambda)\leq f(1,\cdots,1)+\sum_i\frac{\p f}{\p \lambda_i}(1,\cdots,1) (\lambda_i-1).$$
Since $\sigma_k^{1/k}$ and $\left(\frac{\sigma_n}{\sigma_{n-k}}\right)^{1/k}$ are concave functions, it implies that
\begin{equation}\label{ineq:F}
F\left(\frac{1}{w}b^{ik}(D_{kl}u)b^{lj}\right)\leq C\sigma_1\left(\frac{1}{w}b^{ik}(D_{kl}u)b^{lj}\right)+C
\end{equation}
and
\begin{equation}\label{ineq:F*}
F^*\left(w^* \sum_{k,l}b^*_{ik} (D_{kl}u^*) b^*_{lj}\right)\leq C\sigma_1\left(w^* \sum_{k,l}b^*_{ik} (D_{kl}u^*) b^*_{lj}\right)+C.
\end{equation}
Therefore integrating  \eqref{ineq:F}, we get
$$\int_{\Omega}\psi(\langle X,N\rangle, N)\leq C|\Omega|+\int_{\Omega}D\left(\frac{Du}{w}\right)=C|\Omega|-\int_{\p\Omega}\frac{D_{\nu}u}{w}\leq C $$
by the the boundedness of $\Omega^*$. Here $\nu$ is the unit interior normal of $\Omega$. By \eqref{Cond2}, there exists a point $p \in \bar{\Omega}$
such that

$$\langle X,N\rangle \geq -C,$$
which implies $u(p)\leq C$. Since $Du$ is bounded, we get
\begin{equation}\label{sup}
\sup_{\Omega}u\leq C.
\end{equation}
Integrating  \eqref{ineq:F*}, we get
\begin{eqnarray*}
\int_{\Omega^*}\psi^*(y,u^*)&\leq& C|\Omega^*|+\int_{\Omega^*}\sum_{i,j}w^*g_{ij}u^*_{ij}\\
&=&C|\Omega^*|-\int_{\Omega^*}\sum_{i,j}(w^*g_{ij})_ju^*_i-\int_{\p\Omega^*}\sum_{i,j}w^*g_{ij}\nu_j^*u^*_i\leq C
\end{eqnarray*}
by the boundedness  of $\Omega$. Here $\nu^*$ is the unit interior normal of $\Omega^*$. By the definition  of $\psi^*(y,z)$, we have
\begin{align*}
&  \psi^*_z\geq 0 \text{ on } \Omega^*\times \mathbb{R}, \\
& \psi^*(y, z)\rightarrow +\infty, \text{ as } z\rightarrow +\infty,\\
&  \psi^*(y,z)\rightarrow 0 \text{ as } z\rightarrow 0 .
\end{align*}
Therefore same argument as before shows
$$\sup_{\Omega^*} u^*\leq C.$$ Since $u^*(y)=x\cdot y-u(x)$, we obtain the uniformly lower bound of $u$
\begin{equation}\label{inf}
\inf_{\Omega} u\geq -C.
\end{equation}

Combining \eqref{sup} and \eqref{inf}, we have the $C^0$ estimate of $u$. The $C^1$ estimate comes from the boundedness of $\Omega,\Omega^*$. In the following Sections 4-6, we will establish the $C^2$ estimate of \eqref{Legendre}. By Newton-Maclruing inequality, the upper bound of \eqref{Legendre} will give the uniformly  positive lower bound.
Consequently, applying the continuity method, the Leray-Schauder degree theory and the Hopf's boundary point lemma, we demonstrate the existence and uniqueness of \eqref{1.2}. This completes the proof of Theorem \ref{t1.2}.

\bigskip

Then we turn to prove Theorem \ref{t1.1} and \ref{t1.3}. If $\psi$ is a constant function in \eqref{1.3}, then \eqref{1.3} becomes  \eqref{e1.1}. Thus, we only need to prove Theorem \ref{t1.3}.

{\bf The proof of Theorem \ref{t1.3}}: We consider the approximate equation:
\begin{equation}\label{appro}
\sigma_k^{1/k}(\kappa[M_u])=e^{-\epsilon\frac{\langle X,N\rangle}{\langle N, \mathbf{E}\rangle}}\psi(N),
\end{equation}
for any positive constant $\epsilon$ with $Du_{\epsilon}(\Omega)=\Omega^*$. Here $\mathbf{E}=(0, \cdots, 0, 1).$ The right hand side obviously satisfies conditions \eqref{Cond1} and \eqref{Cond2}.
Therefore according to Theorem \ref{t1.2}, we can find a solution $u_{\epsilon}$ of \eqref{appro}. In view of the $C^0$ estimate in the proof of Theorem \ref{t1.2}, we get
\begin{equation}\label{eu}
|\epsilon u_{\epsilon}|\leq C.
\end{equation}
The $C^1$ estimate is straightforward.  For the $C^2$ estimate,  it does not depend on $C^0$ norm of $u_{\epsilon}$. Thus, if we let
\begin{equation}\label{hatu}
\hat{u}_{\epsilon}=u_{\epsilon}-\frac{1}{|\Omega|}\int_{\Omega}u_{\epsilon},
\end{equation}
 then we have
$$\|\hat{u}_{\epsilon}\|_{C^2(\bar{\Omega})}\leq C.$$
By Evance-Krylov theory, we get
$$\|\hat{u}_{\epsilon}\|_{C^{2,\alpha}(\bar{\Omega})}\leq C$$ for some $0<\alpha<1$. Thus we can assume
$$\lim_{\epsilon\rightarrow 0}\hat{u}_{\epsilon}=\hat{u},$$ in the space $C^{2,\beta}(\bar{\Omega})$ for $\beta<\alpha$. Thus $\epsilon \hat{u}_{\epsilon}\rightarrow 0$.
Using \eqref{hatu} and \eqref{eu}, we get
$$\frac{\epsilon}{|\Omega|}\left|\int_{\Omega}u_{\epsilon}\right|\leq C.$$
Thus taking some subsequence, we can prove $\epsilon u_{\epsilon}$ convergences to some constant $\log c$. Now letting $\epsilon\rightarrow 0$ in \eqref{appro}, we get
\eqref{1.3}. This completes the proof of Theorem \ref{t1.3}.

\section{Strict obliqueness}

In this section, we will prove the strict obliqueness of the problem \eqref{e1.1}.  Since our argument is closed to \cite{Urbas2002}, \cite{ Urbas1997} and \cite{Urbas2001},
we just provide a brief outline.
\begin{deff}
Suppose $\Omega$ is a strictly convex bounded domain.
A uniformly concave function $h:\mathbb{R}^n\rightarrow\mathbb{R}$ is called the defining function of $\Omega$, if
$\Omega=\{p\in\mathbb{R}^{n} : h(p)>0\}$ and  $|Dh|=1$ on $\partial \Omega$.
\end{deff}

\begin{lemma}\label{3.4}
 Suppose $\Omega$, $\Omega^*$ are bounded strictly convex domains with smooth boundary in $\mathbb{R}^{n}$. Suppose  $h(p_1,\cdots,p_n)$ is the uniform concave defining function of $\Omega^*$,
then $$\nu^*(p)=Dh(p_1,\cdots,p_n)=\left(\frac{\p h}{\p p_1},\cdots, \frac{\p h}{\p p_n}\right)$$ is the interior unit normal of $\partial\Omega^*$. Let $\nu$ be the unit interior normal of $\partial\Omega$.
 If $u$ is a strictly convex solution to  (\ref{e1.1}) with $\Omega^*=Du(\Omega)$,  then the strict obliqueness estimate
\begin{equation}\label{ee3.9}
\langle\nu^*(Du(x)), \nu(x)\rangle\geq c_0>0
\end{equation}
holds for any $x\in\partial \Omega$. Here $c_0$ is some constant only depending on  $\Omega$ and $\Omega^*$.
\end{lemma}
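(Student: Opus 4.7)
The plan is to follow Urbas's strategy from \cite{Urbas1997, Urbas2001, Urbas2002}, adapted to the prescribed curvature setting, by applying Hopf's lemma in both the primal and the Legendre-dual pictures to natural auxiliary functions that vanish on the respective boundaries, and then combining the two normal-derivative estimates into the desired obliqueness bound.

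First, I would introduce $\phi(x):=h(Du(x))$ on $\bar{\Omega}$. The second boundary condition $Du(\partial\Omega)=\partial\Omega^{*}$ forces $\phi\equiv 0$ on $\partial\Omega$, while the strict convexity of $\Omega^{*}$ and the positivity of $h$ in the interior give $\phi>0$ inside $\Omega$. Direct differentiation yields
\begin{equation*}
\phi_{i}=h_{p_{k}}(Du)\,u_{ki},\qquad \phi_{ij}=h_{p_{k}p_{l}}(Du)\,u_{ki}u_{lj}+h_{p_{k}}(Du)\,u_{kij}.
\end{equation*}
Let $L$ denote the linearization of the curvature operator from \eqref{main equation graph}, with coefficients built from $F^{ij}$ and $b^{ij}$. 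Differentiating the equation once in $x_{k}$ produces an identity that controls the third-order term $F^{ij}\,h_{p_{k}}(Du)\,u_{kij}$, while the uniform concavity of $h$ makes the quadratic term nonpositive after contraction with the positive definite matrix $(F^{ij})$. Combining these, $L\phi\leq C$ in $\Omega$ with constants depending only on $\Omega$, $\Omega^{*}$ and the ellipticity of the equation.

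Next, I would apply Hopf's lemma: using the defining function $g$ of $\Omega$ itself to build a barrier for $L$, the zero boundary data of $\phi$ together with its interior positivity yield $D_{\nu}\phi(x_{0})\geq c_{0}>0$ for every $x_{0}\in\partial\Omega$. A symmetric computation on the Legendre-dual side, with $\phi^{*}(y):=g(Du^{*}(y))$ on $\overline{\Omega^{*}}$ and the dual equation \eqref{Legendre1}, produces the analogous bound $D_{\nu^{*}}\phi^{*}(y_{0})\geq c_{0}^{*}>0$ at $y_{0}=Du(x_{0})$.

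Finally, to translate the two normal-derivative bounds into obliqueness: since $D^{2}u(x_{0})$ is symmetric and maps $T_{x_{0}}\partial\Omega$ isomorphically onto $T_{y_{0}}\partial\Omega^{*}$, one gets scalars $\alpha,\beta>0$ with $D^{2}u(x_{0})\nu^{*}(y_{0})=\alpha\,\nu(x_{0})$ and $D^{2}u^{*}(y_{0})\nu(x_{0})=\beta\,\nu^{*}(y_{0})$; taking inner products gives $\alpha=D_{\nu}\phi(x_{0})\geq c_{0}$, $\beta=D_{\nu^{*}}\phi^{*}(y_{0})\geq c_{0}^{*}$, and the two relations together force $\alpha\beta=1$. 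Writing $\gamma=\langle\nu^{*}(y_{0}),\nu(x_{0})\rangle$, the identities $\alpha\gamma=\nu^{*T}D^{2}u\,\nu^{*}$ and $\beta\gamma=\nu^{T}D^{2}u^{*}\nu$, combined with the Cauchy-Schwarz inequality in the inner products induced by $D^{2}u$ and $D^{2}u^{*}$ and the relation $\alpha\beta=1$, then deliver the desired uniform lower bound $\gamma\geq c_{0}'>0$ depending only on $\Omega$, $\Omega^{*}$ and the ellipticity.

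The main obstacle is ensuring that the Hopf constants $c_{0}$ and $c_{0}^{*}$ depend only on the fixed domain data and the ellipticity of the equation, not on the a priori unknown $\|D^{2}u\|_{\infty}$; otherwise the obliqueness would become circularly dependent on the $C^{2}$ estimate it is meant to precede. This forces a careful exploitation of the uniform concavity of $h$ together with the specific quotient-Hessian structure of the dual equation \eqref{Legendre1}, which is the main point at which our setting is more delicate than the Monge-Amp\`ere case treated in \cite{Urbas1997}.
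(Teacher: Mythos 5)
Your overall framework (barrier/maximum-principle arguments in both the primal and the Legendre-dual pictures, following Urbas) matches the paper's, but the final combination step has a genuine gap, and it is precisely the step where the obliqueness is supposed to be produced. Write $\alpha=D_\nu\phi(x_0)$ and $\beta=D_{\nu^*}\phi^*(y_0)$, $\gamma=\langle\nu^*,\nu\rangle$. Your identities are correct: $D^2u\,\nu^*=\alpha\nu$, $D^2u^*\,\nu=\beta\nu^*$, hence $\alpha\beta=1$, $\nu^{*T}D^2u\,\nu^*=\alpha\gamma$ and $\nu^TD^2u^*\nu=\beta\gamma$. But then the two Hopf bounds $\alpha\geq c_0$ and $\beta\geq c_0^*$ are, via $\alpha\beta=1$, nothing more than the two-sided bound $c_0\leq\alpha\leq 1/c_0^*$, and this is compatible with $\gamma$ arbitrarily small: a positive definite $D^2u(x_0)$ with $D^2u\,\nu^*=\alpha\nu$ and $\nu^{*T}D^2u\,\nu^*=\alpha\gamma$ tiny violates none of your constraints (it is just very anisotropic). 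The Cauchy--Schwarz step you invoke degenerates to an equality, since $(D^2u)^{-1}\nu$ is exactly parallel to $\nu^*$: $\gamma=\nu^{*T}D^2u\,(D^2u)^{-1}\nu=\sqrt{\alpha\gamma}\sqrt{\beta\gamma}$ identically. So no lower bound on $\gamma$ follows. What is missing is a positive lower bound on $u_{kl}h_{p_k}h_{p_l}=\alpha\gamma$ (equivalently on $u^{ij}\nu_i\nu_j=\beta\gamma$), i.e.\ on the derivative of $\phi$ in the \emph{oblique} direction $\nu^*$; Hopf's lemma only controls this derivative up to the factor $\langle\nu^*,\nu\rangle$, which is circular.

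The paper closes this gap differently: it applies the barrier argument not to $h(Du)$ alone but to $\omega=\chi+Ah(Du)$ with $\chi=\langle\nu^*(Du),\nu\rangle$, \emph{at the minimum point $x_0$ of $\chi$ over $\partial\Omega$}. The minimality of $\chi$ is what makes $\omega-\omega(x_0)\geq 0$ on the boundary portion of $\Omega_r$, and the resulting inequality $\omega_n(x_0)\geq -C$, expanded using the boundary identity $\chi=\sqrt{(u^{ij}\nu_i\nu_j)(u_{kl}h_{p_k}h_{p_l})}$ on $\partial\Omega$, yields the uniform lower bound for $u_{kl}h_{p_k}h_{p_l}$; the dual version yields the bound for $u^{ij}\nu_i\nu_j$; multiplying gives $\chi(x_0)^2\geq c>0$. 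If you want to repair your write-up, you must incorporate $\chi$ into the auxiliary function and work at its boundary minimum, rather than treating $h(Du)$ and $h^*(Du^*)$ separately. A secondary point: your estimate $L\phi\leq C$ should be $L\phi\leq C\mathcal{T}$ with $\mathcal{T}=\sum_iG^{ii}$ (and the barrier must satisfy $L\varphi\leq -c\mathcal{T}$), since for the $k$-Hessian curvature operator $\mathcal{T}$ is not a priori comparable to $1$ from above; the paper's comparison is arranged so that only the lower bound $\mathcal{T}\geq c_1$ is needed.
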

\begin{proof}
Let $$\chi(x)=\langle\nu^*(Du(x)),\nu(x)\rangle=\sum_kh_{p_k}(Du(x))\nu_k(x),$$ where $h_{p_k}=\frac{\p h}{\p p_k}$ and $\nu=(\nu_1,\cdots,\nu_n)$.
A same argument as \cite{Urbas2001},  we get $\chi\geq 0$ on $\p \Omega$ and we can prove
\begin{eqnarray}\label{a}
\chi=\sqrt{u^{ij}\nu_i\nu_j(D_{kl}u) h_{p_k}h_{p_l}}  \ \ \text{ on }\ \  \p\Omega.
\end{eqnarray}
Suppose $x_0\in\p\Omega$ is the minimum value point of the function $\chi |_{\p\Omega}$. Let
$$\omega(x)=\chi(x)+A h(Du(x)),$$
where $A$ is a positive sufficiently large constant. A same argument as \cite{Urbas2001}, to prove that $\sum_{k,l}(D_{kl}u) h_{p_k}h_{p_l}$ has a uniform positive lower bound, we only need to prove
\begin{eqnarray}\label{a1}
\omega_n(x_0)\geq -C,
\end{eqnarray}
where $n$ is the unit outward normal of $\p\Omega$.

Suppose $h^*(x)$ is the uniformly concave function of $\Omega$  giving $\nu = Dh^*$. Let $u^*$ be the  Legendre transform of $u$ satisfying \eqref{Legendre}.  We define
$$\chi^*(y)=\langle\nu^*(y),\nu(Du^*(y))\rangle, $$ then $\chi^*(y)=\chi(x)$, where $y=Du(x)$. Let
$$\omega^*(y)=\chi^*(y)+A^* h^*(Du^*(y)).$$ Similarly, to prove $u^{ij}\nu_i\nu_j$ has a uniform positive lower bound, we only need to prove
\begin{eqnarray}\label{a2}
\omega^*_{n^*}(y_0)\geq -C,
\end{eqnarray}
where $n^*$ is the unit outward normal of $\p\Omega^*$ and $y_0=Du(x_0)$. To obtain \eqref{ee3.9}, 
it suffices to prove \eqref{a1} and \eqref{a2}.

At first we give the proof of \eqref{a1}. Denote
$$G(Du,D^2u)=\sigma_k(a_{ij}) \text{ and } \mathcal{L}f=G^{ij}D_{ij}f+(G^s-\psi^s)D_sf$$
for any function $f$, where
$$G^{ij}=\frac{\p G}{\p u_{ij}}=\sum_{p,q}\sigma_k^{pq}\frac{1}{w}b^{ip}b^{qj},$$
and
\begin{eqnarray*}
G^s&=&\frac{\p G}{\p u_s}=-\frac{u_s}{w} \sum_{i,j}G^{ij}u_{ij}-\frac{2}{w(1 + w)}\sum_{t,j}
\sigma^{ij}_k a_{it}( wu_t b^{sj} + u_jb^{ts}),\\
\psi^s&=&\frac{\p \psi}{\p u_s}=\psi_z\left(\frac{x_s}{w}-\frac{\langle x,Du\rangle-u}{w^3}u_s\right)+\sum_{i\neq n+1}\psi_{p_i}\left(\frac{-\delta_{is}}{w}+\frac{u_iu_s}{w^3}\right)-\psi_{p_{n+1}}\frac{u_s}{w^3}.
\end{eqnarray*}
Near $x_0$, we can calculate
\begin{equation*}\label{e3.10}
 \begin{aligned}
\mathcal{L}\omega=&\sum_{l,m,k}G^{ij}u_{il}u_{jm}(h_{p_{k}p_{l}p_{m}}\nu_{k}+A h_{p_{l}p_{m}})\\
&+2\sum_{l,k}G^{ij}h_{p_{k}p_{l}}u_{li}\nu_{kj}+\sum_kG^{ij}h_{p_{k}}\nu_{kij}+\sum_kG^{s}h_{p_{k}}\nu_{ks}\\
\leq& \sum_{l,m,k}(h_{p_{k}p_{l}p_{m}}\nu_{k}+A h_{p_{l}p_{m}}+\delta_{lm})G^{ij}u_{il}u_{jm}+C\mathcal{T}+C,
 \end{aligned}
\end{equation*}
where $\mathcal{T}=\sum_iG^{ii}$. Since $D^{2}h\leq-\theta I$ for some positive constant $\theta$, we may choose sufficiently large $A$ satisfying
$$(h_{p_{k}p_{l}p_{m}}\nu_{k}+A h_{p_{l}p_{m}}+\delta_{lm})<0,$$
which implies
\begin{equation}\label{e3.11}
\mathcal{L}\omega\leq C\mathcal{T} \    \text{ in } \   \ \Omega,
\end{equation}
by $\mathcal{T}\geq c_1>0$ for some constant $c_1$.

Without loss of generality, we may assume $x_{0}$ is the origin and that the positive $x_{n}$-axis is in the interior normal direction to $\partial\Omega$
at the origin. Suppose near the origin, the boundary $\partial\Omega$ is given by
\begin{equation}\label{e3.12}
x_{n}=\rho(x')=\frac{1}{2}\sum_{\alpha<n}\kappa^{b}_{\alpha}x^{2}_{\alpha}+O(|x'|^{3}),
\end{equation}
where $\kappa^{b}_{1},\kappa^{b}_{2},\cdots,\kappa^{b}_{n-1}$ are the principal curvatures of $\partial\Omega$  at the origin and $x'=(x_{1},x_{2},\cdots,$ $x_{n-1})$.

Denote a neighborhood of $x_{0}$ in $\Omega$ by
$$\Omega_{r}:=\{x\in\Omega:\rho(x')< x_{n}<\rho(x')+r^{2}, |x'|<r\},$$
where $r$ is a small positive constant to be chosen.
Define a barrier function
\begin{equation}\label{boundary}
\varphi(x)=-\rho(x')+ x_{n}+\delta|x'|^{2}-Kx^{2}_{n},
\end{equation}
where $\delta=\frac{1}{6}\min\{\kappa^{b}_{1},\kappa^{b}_{2},\cdots,\kappa^{b}_{n-1}\}$ and $K$ is some large undetermined positive constant. It is clear that $-\varphi$ is strictly convex when $r$ is sufficiently small. By the concavity of $\sigma_k^{1/k}$ and Proposition 2.1 in \cite{Jiao2022}, one can show that
\begin{equation*}
\begin{aligned}
G^{ij}\varphi_{ij}\leq& -kG^{1-1/k}(Du,D^2u)G^{1/k}(-D^2\varphi, Du)\\
\leq& -kG^{1-1/k}(Du,D^2u)\frac{\sigma_k^{1/k}(-D^2\varphi)}{w^{1+2/k}}\\
\leq& -k\eta_0G^{1-1/k}(Du,D^2u)\frac{K^{1/k}}{w^{1+2/k}},
\end{aligned}
\end{equation*}
where $\eta_0$ is some small positive constant. Thus, since $|G^s|$ is bounded, we get
\begin{equation}\label{e2.45}
  \mathcal{L}\varphi
  \leq -c_3\mathcal{T},\,\,\,x\in\Omega_{r},
\end{equation}
when $K$ is sufficiently large and $r$ is sufficiently small.

Note that the boundary $\partial \Omega_r$ consists of three parts: $\partial \Omega_r
= \partial_1 \Omega_r \cup \partial_2 \Omega_r \cup \partial_3 \Omega_r$, where
$\partial_1 \Omega_r$ and $\partial_2 \Omega_r$ are defined by  $\{x_n=\rho\}\cap\bar{\Omega}_r$ and $\{x_n=\rho +r^2\}\cap\bar{\Omega}_r$
respectively, and $\partial_3 \Omega_r$ is defined by $\{|x'| = r\}\cap\bar{\Omega}_r$. Thus,  when $r$ is sufficiently small (depending on $\delta$ and $K$), we have
\begin{equation}
\label{BC2-12}
\begin{aligned}
\varphi \geq &  \frac{\delta}{2} |x'|^2, \mbox{ on } \partial_1 \Omega_r,\\
\varphi \geq & \frac{r^2}{2},\ \ \ \ \mbox{ on } \partial_2 \Omega_r,\\
\varphi \geq & \frac{\delta r^2}{2}, \ \ \ \mbox{ on } \partial_3 \Omega_r.
\end{aligned}
\end{equation}
Therefore we have
$$\varphi\geq 0, \,\,\,\text{on } \partial\Omega_{r}.$$
In order to obtain the desired results, it suffices to consider the auxiliary function
$$\Phi(x)=\omega(x)-\omega(x_{0})+B\varphi(x),$$
where $B$ is some positive large constant to be determined.
Combining (\ref{e3.11}) with (\ref{e2.45}), a direct computation yields
\begin{equation*}
\mathcal{L}(\Phi(x))\leq (C-B c_{3})\mathcal{T}<0,
\end{equation*}
when $B$ is large.
On $\partial_1\Omega$, it is clear that $\Phi \geq0$. On $\partial_2 \Omega_r \cup \partial_3 \Omega_r$,  we also have $\Phi\geq 0$ if $B$ is sufficiently large.
By the maximum principle, we arrive at
\begin{equation}\label{e3.15aaaa}
\Phi(x)\geq 0  \ \ \text{ on } \bar{\Omega}_r.
\end{equation}
Combining with $\Phi(x_0)=0$, we have $\partial_n\Phi(x_0)\geq 0$, which gives \eqref{a1}.

Finally, we turn to prove \eqref{a2}, which is similar to the proof of \eqref{a1}. Define
$$\mathcal{L}^*=G^{*ij}D_{ij},$$
where $$G^*(y, D^2 u^*)=\frac{\sigma_n}{\sigma_{n-k}}\Big(\la^*\Big[w^* \sum_{k,l}b^*_{ik}u^*_{kl}b^*_{lj}\Big]\Big)\text{ and } G^{*ij}=\frac{\p G^*}{\p u^*_{ij}}.$$
Here, $w^*,b^*_{ij}$ are defined in subsection 2.3. Similar calculation as above shows
\begin{equation}\label{e3.16}
\mathcal{L^*}\omega^*\leq C\mathcal{T}^*,
\end{equation}
where $\mathcal{T}^*=\sum_iG^{*ii}$.

Since our equation is invariant under rotations in $\mathbb{R}^n$.
We may assume the positive $y_{n}$-axis is the interior normal direction to $\partial\Omega^*$ at $y_0=((y_0)_1,(y_0)_2,\cdots,(y_0)_n)$.
Suppose near $y_0$, the boundary $\partial\Omega^*$ is given by
\begin{equation}\label{e3.12n}
\bar{y}_n=y_{n}-(y_0)_n=\rho^*(y')=\frac{1}{2}\sum_{\alpha<n}\kappa^{b*}_{\alpha}(y_{\alpha}-(y_0)_{\alpha})^2+O(|y'|^{3}),
\end{equation}
where $\kappa^{b*}_{1},\kappa^{b*}_{2},\cdots,\kappa^{b*}_{n-1}$ are the principal curvatures of $\partial\Omega^*$  at $y_0$ and $y'=(y_{1}-(y_0)_1,y_{2}-(y_0)_2,\cdots,y_{n-1}-(y_0)_{n-1})$.
Denote a neighborhood of $y_{0}$ in $\Omega^*$ by
\begin{equation}\label{Omega*}
\Omega^*_{r}:=\{y\in\Omega^*:\rho^*(y')< \bar{y}_{n}<\rho^*(y')+r^{2}, |y'|<r\},
\end{equation}
where $r$ is a small positive constant to be chosen.
Define a barrier function
\begin{equation}\label{boundary*}
\varphi^*(y)=-\rho^*(y')+ \bar{y}_{n}+\delta^*|y'|^{2}-K^*\bar{y}_{n}^2,
\end{equation}
where $\delta^*=\frac{1}{6}\min\{\kappa^{b*}_{1},\kappa^{b}_{2*},\cdots,\kappa^{b*}_{n-1}\}$ and $K^*$ is some large undetermined positive constant. Then we have
\begin{equation*}
  \mathcal{L^*}\varphi^*
  \leq -c_4\mathcal{T}^*,\,\,\,y\in\Omega^*_{r}, \ \ \text{ and } \varphi^*\geq 0 \text{ on } \p\Omega_r^*.
\end{equation*}
Therefore,  for the  auxiliary function
$$\Psi(y)=\omega^*(y)-\omega^*(y_{0})+B^*\varphi^*(y),$$
we have $\mathcal{L}^*\Psi\leq 0$ if $B^*$ is sufficiently large, which implies it is
always nonnegative on $\bar{\Omega}_r^*$ and $\Psi(y_0)=0$.
Thus, we get \eqref{a2}.
\end{proof}

\section{The derivatives from rotations}
First, let's prove a technical lemma. We follow the notations introduced in Section \ref{Pre}.
\begin{lem}\label{4.1}
Suppose $(y_1,\cdots,y_n)$ are the rectangular coordinate of $\mathbb{R}^n$. Suppose $\nabla$ is the standard Levi-Civita connection on the unit sphere. Let
	\[
	e_i = \sum_kw^* \, b_{ik}^*  \frac{\partial}{\partial y_k}.
	\]
Then $\{e_1,\cdots,e_n\}$ is an orthonormal frame on $\mathbb{S}^n$. Moreover, for any function $v$ defined on some subset of $\mathbb{R}^n$, we get
\begin{eqnarray}\label{form}
\sum_{k,l}w^* \, b_{ik}^* D_{kl}v \, b_{lj}^*=\nabla^2_{ij}\frac{v}{w^*}+\frac{v}{w^*}\delta_{ij}.
\end{eqnarray}
	\end{lem}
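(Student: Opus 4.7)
The plan is to exploit that $\tilde v := v/w^*$, viewed as a function on $\mathbb{S}^n_+$ through the projection $P:\mathbb{S}^n_+\to\mathbb{R}^n$ of Section~\ref{Pre}, is the restriction of a natural positively homogeneous degree-one function on $\mathbb{R}^{n+1}_+$. The Gauss formula then lets us identify the spherical shifted Hessian of $\tilde v$ with the Euclidean Hessian of this extension, turning what would otherwise be a long computation with Christoffel symbols of the non-coordinate frame $\{e_i\}$ into bookkeeping with the chain rule.

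To verify the orthonormal frame claim, I would parametrise $\mathbb{S}^n_+$ by $y\in\mathbb{R}^n$ via the inverse projection $X(y)=(-y_1/w^*,\dots,-y_n/w^*,1/w^*)$ and pull back the Euclidean metric of $\mathbb{R}^{n+1}$, obtaining the round metric in these coordinates:
\[
g^S_{kl}=\frac{\delta_{kl}}{(w^*)^2}-\frac{y_k y_l}{(w^*)^4},\qquad (g^S)^{kl}=(w^*)^2\bigl(\delta_{kl}+y_k y_l\bigr)=(w^*)^2 g_{kl}.
\]
Using the two elementary identities $(b^*)^2=g$ and $\sum_l b^*_{il}y_l=w^* y_i$, both immediate from $b^*_{ij}=\delta_{ij}+y_iy_j/(1+w^*)$, a direct calculation yields $g^S(e_i,e_j)=\delta_{ij}$.

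For the Hessian identity, I would lift $\tilde v$ to the positively homogeneous degree-one extension
\[
\bar{\tilde v}(X):=X_{n+1}\,v\!\left(-\frac{X_1}{X_{n+1}},\dots,-\frac{X_n}{X_{n+1}}\right),\qquad X_{n+1}>0,
\]
whose restriction to $\mathbb{S}^n_+$ is $\tilde v$ and which, by Euler's identity, satisfies $\sum_\alpha X_\alpha\partial_{X_\alpha}\bar{\tilde v}=\bar{\tilde v}$ on $\mathbb{S}^n_+$. Applying the Gauss formula for $\mathbb{S}^n\hookrightarrow\mathbb{R}^{n+1}$ with outward unit normal $X$ (so that the Euclidean and spherical connections differ by $-\langle V,W\rangle X$), one obtains the standard identity
\[
(D^2\bar{\tilde v})(V,W)=\nabla^2_{V,W}\,\tilde v+\langle V,W\rangle\,\tilde v
\]
for tangent vectors $V,W$ to $\mathbb{S}^n_+$, where $D^2$ denotes the Euclidean Hessian on $\mathbb{R}^{n+1}$. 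Taking $V=e_i$, $W=e_j$ and using orthonormality from the first step reduces \eqref{form} to
\[
(D^2\bar{\tilde v})(e_i,e_j)=w^*\sum_{k,l}b^*_{ik}\,D_{kl}v\,b^*_{lj}.
\]

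Finally I would verify this last identity by direct Euclidean computation. A chain-rule calculation in $\mathbb{R}^{n+1}$-coordinates gives
\[
(D^2\bar{\tilde v})_{\alpha\beta}=w^*\sum_{k,l}D_{kl}v(y)\,\zeta^k_\alpha\,\zeta^l_\beta,
\]
where $\zeta^k_\alpha=\delta^k_\alpha$ for $\alpha\le n$ and $\zeta^k_{n+1}=y_k$. Writing $e_i$ in the standard $\mathbb{R}^{n+1}$-basis via $\partial/\partial y_k=\sum_\alpha(\partial X_\alpha/\partial y_k)\,\partial/\partial X_\alpha$, the components $(e_i)^\alpha$ satisfy the clean identity $\sum_\alpha\zeta^k_\alpha(e_i)^\alpha=-b^*_{ik}$, from which \eqref{form} immediately follows. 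The main obstacle is choosing the right framework: a direct attack through Christoffel symbols of the non-coordinate frame $\{e_i\}$ is feasible but tedious, while the homogeneous-extension trick collapses the proof to Euler's identity together with the two elementary identities $\sum_\alpha\zeta^k_\alpha(e_i)^\alpha=-b^*_{ik}$ and $\sum_l b^*_{il}y_l=w^* y_i$.
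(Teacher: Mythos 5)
Your argument is correct, and it takes a genuinely different route from the paper's. The paper proves \eqref{form} by brute force in the $y$-coordinates: it writes down the pulled-back round metric $\tilde g_{kl}=g^{kl}/(w^*)^2$, computes its Christoffel symbols $\Gamma^k_{ij}=-(y_i\delta_{kj}+y_j\delta_{ki})/(w^*)^2$, and then expands $\nabla_{ij}(v/w^*)$ term by term until everything but $w^*b^*_{ik}v_{kl}b^*_{lj}-\tfrac{v}{w^*}\delta_{ij}$ cancels. You instead lift $\tilde v=v/w^*$ to the degree-one homogeneous function $\bar{\tilde v}(X)=X_{n+1}v(-X_1/X_{n+1},\dots,-X_n/X_{n+1})$ and invoke the Gauss formula for $\mathbb{S}^n\hookrightarrow\mathbb{R}^{n+1}$ together with Euler's identity, which is exactly the classical support-function mechanism behind \eqref{sphere}; the only computations left are the Euclidean Hessian of $\bar{\tilde v}$ and the $\mathbb{R}^{n+1}$-components of $e_i$, and I have checked that your two key identities $\sum_l b^*_{il}y_l=w^*y_i$ and $\sum_\alpha\zeta^k_\alpha(e_i)^\alpha=-b^*_{ik}$ do hold (indeed $(e_i)^k=-b^*_{ik}+y_ky_i/w^*$ and $(e_i)^{n+1}=-y_i/w^*$, so the $y_ky_i/w^*$ terms cancel). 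Your treatment of the orthonormality claim is essentially the paper's. What your route buys is conceptual clarity and robustness: it explains \emph{why} the shifted spherical Hessian appears (second fundamental form of the sphere plus homogeneity), and it would transfer verbatim to other projective charts. What the paper's route buys is self-containedness: no appeal to the Gauss formula or to extending functions off the sphere, at the cost of a longer cancellation. One presentational caveat: when you apply the Gauss formula you should note explicitly that $\nabla^2\tilde v$ is tensorial, so evaluating it on the non-coordinate frame $\{e_i\}$ at a point is legitimate without tracking $\nabla_{e_j}e_i$; as written this is implicit but the step is sound.
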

	\begin{proof}
Denote $g_{ij} = \delta_{ij} + y_i y_j$
and $g^{ij} = \delta_{ij} - \frac{y_i y_j}{1+|y|^2}$. We find that the induced metric of the unit sphere on $\mathbb{R}^n$ by the map $P$ is given by
\[
\tilde{g} = \sum_{i,j}\frac{1}{1+|y|^2}\Big(\delta_{ij} - \frac{y_i y_j}{1+|y|^2}\Big) d y_i \otimes d y_j
   = \frac{g^{ij}}{{w^*}^2} d y_i \otimes d y_j.
\]
Thus, we get
$$\tilde{g}(e_i,e_j)=g^{pq}b^*_{ip}b^*_{qj}=\delta_{ij}.$$
Now we calculate the Christoffel symbol $\Gamma_{ij}^k$ of $\tilde{g}$. First we have
\[
\frac{\partial \tilde{g}_{li}}{\partial y_j}
  = - \frac{2 y_j g^{li}}{{w^*}^4} + \frac{1}{{w^*}^2} \Big(\frac{- \delta_{lj}y_i - \delta_{ij} y_l}{{w^*}^2}
     + \frac{2 y_l y_i y_j}{{w^*}^4}\Big).
\]
Therefore we get
\[
\begin{aligned}
\Gamma_{ij}^k = \,& \frac{1}{2} \tilde{g}^{kl} \Big(\frac{\partial \tilde{g}_{li}}{\partial y_j}
  + \frac{\partial \tilde{g}_{lj}}{\partial y_i} - \frac{\partial \tilde{g}_{ij}}{\partial y_l}\Big)=  - \frac{1}{{w^*}^2} (y_i \delta_{kj} + y_j \delta_{ki}).
\end{aligned}
\]
Denote $\tilde{v} = \frac{v}{w^*}$.  Then we have 
\[
\frac{\partial \tilde{v}}{\partial y_k} = \frac{1}{w^*} \frac{\partial v}{\partial y_k}
  - \frac{v y_k}{{w^*}^3}
\]
and
\[
\frac{\partial^2 \tilde{v}}{\partial y_k y_l}
  = \frac{1}{w^*} \frac{\partial^2 v}{\partial y_k y_l} - \frac{y_k}{{w^*}^3} \frac{\partial v}{\partial y_l}
     - \frac{y_l}{{w^*}^3} \frac{\partial v}{\partial y_k} - \frac{\delta_{kl} v}{{w^*}^3}
        + 3 \frac{y_k y_l}{{w^*}^5} v.
\]
Consequently, we have
\[
\begin{aligned}
\nabla_{ij} \tilde{v} = \,& \sum_{k,l}{w^*}^2 b^*_{ik} b^*_{jl} \nabla_{k l} \tilde{v}\\
 = \,& \sum_{k,l}{w^*}^2 b^*_{ik} b^*_{jl} \Big(\frac{\partial^2 \tilde{v}}{\partial y_k y_l}
     - \Gamma_{kl}^s \frac{\partial \tilde{v}}{\partial y_s}\Big)\\
 = \,& \sum_{k,l}{w^*}^2 b^*_{ik} b^*_{jl} \Big[\frac{v_{kl}}{w^*} - \frac{y_k v_l}{{w^*}^3}
     - \frac{y_l v_k}{{w^*}^3} - \frac{\delta_{kl} v}{{w^*}^3}\\
      &  + 3 \frac{y_k y_l v}{{w^*}^5}
        +\sum_s \frac{y_k \delta_{sl} + y_l \delta_{sk}}{{w^*}^2} \Big(\frac{v_s}{w^*}
  - \frac{v y_s}{{w^*}^3}\Big)\Big]\\
  = \,& \sum_{k,l}w^* b^*_{ik} b^*_{jl} v_{kl} - \frac{v}{w^*} \delta_{ij}.
\end{aligned}
\]
Here, in the first equality, $\nabla_{ij}$ in the left hand side means taking covariant derivatives with respect to $e_i,e_j$ and
$\nabla_{kl}$ in the right hand side means taking covariant derivatives with respect to $\frac{\p}{\p y_k},\frac{\p}{\p y_l}$.
Thus, \eqref{form} follows immediately.
\end{proof}

Always assume $\epsilon_0$ is a small positive constant. Suppose $\{A_t\}, t\in[0,\epsilon_0]$ is a family of orthogonal matrices in $\mathbb{R}^{n+1}$. For any $y\in\mathbb{R}^n$, we define
\begin{eqnarray}\label{sigmat}
\sigma_t(y)=PA_t^{-1}P^{-1}(y),
\end{eqnarray}
where $P: \mathbb{S}^n_+\rightarrow \mathbb{R}^n$ is the projection map defined by \eqref{proj}, and $P^{-1}, A_t^{-1}$ are inverse maps of $P, A_t$ respectively.
 Suppose $u^*$ is a solution of \eqref{Legendre}. We then define
 \begin{equation}\label{ust}
 \tilde{u}_t^*(y)=\frac{\sqrt{1+|y|^2}}{\sqrt{1+|\sigma_t y|^2}}u^*(\sigma_t(y)), \text{ and } u_t^*=u^*(\sigma_t(y)).
 \end{equation}
Then we derive the equation of $\tilde{u}^*_t$ as the following lemma.
 \begin{lem}
 For any $t\in[0,\epsilon_0]$, $\tilde{u}_t^*$  satisfies
\begin{equation}\label{Fst}
	F^*\left( \sum_{k,l}w^* \, b_{ik}^* (\tilde{u}^*_t)_{kl} \, b_{lj}^* \right) (y)= \psi^*(\sigma_t y, u^*_t(y)).
	\end{equation}
  \end{lem}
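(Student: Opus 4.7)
The plan is to reduce the equation \eqref{Fst} to the statement that the operator $F^*$ applied to the spherical Hessian plus identity of the support function is invariant under rotations of $\mathbb{R}^{n+1}$. Recall from Section \ref{Pre} that for a strictly convex graph $M_u$, the support function satisfies $v = -\langle X,N\rangle = u^*/w^*$, so $v$ viewed in the $y$-coordinates equals $u^*/w^*$. The first step is to observe the crucial identity
\[
\frac{\tilde u^*_t(y)}{w^*(y)} \;=\; \frac{u^*(\sigma_t y)}{\sqrt{1+|\sigma_t y|^2}} \;=\; \frac{u^*(\sigma_t y)}{w^*(\sigma_t y)},
\]
which is immediate from the definition \eqref{ust}. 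Writing $V(\xi) := (u^*/w^*)(P(\xi))$ for the support function on $\mathbb{S}^n_+$, this means that $\tilde u^*_t/w^*$ is the pullback of the rotated support function $V\circ A_t^{-1}$ by $P^{-1}$.

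Next I would apply Lemma~\ref{4.1} to both sides of the equation. Applied to $\tilde u^*_t$ at the point $y$, it yields
\[
\sum_{k,l} w^* b^*_{ik}(\tilde u^*_t)_{kl}\, b^*_{lj} \;=\; \nabla^2_{ij}\!\left(\tfrac{\tilde u^*_t}{w^*}\right) + \tfrac{\tilde u^*_t}{w^*}\,\delta_{ij},
\]
where $\nabla$ is the spherical Levi-Civita connection transported through $P$. The eigenvalues of the right-hand side at $y$ coincide with those of $\nabla^2(V\circ A_t^{-1}) + (V\circ A_t^{-1})I$ evaluated at the sphere point $\xi=P^{-1}(y)$. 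Similarly, applying Lemma~\ref{4.1} to $u^*$ at $\sigma_t y$ shows that the original operator $w^* b^*_{ik}(u^*)_{kl}b^*_{lj}$ at $\sigma_t y$ has eigenvalues equal to those of $\nabla^2 V + V I$ at $P^{-1}(\sigma_t y)=A_t^{-1}\xi$.

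Now I would invoke the rotation invariance of the round metric: since $A_t$ restricts to an isometry of $\mathbb{S}^n$, pullback commutes with the spherical Hessian, so $\nabla^2(V\circ A_t^{-1})(\xi)$ is the image under $A_t^{-*}$ of $\nabla^2 V(A_t^{-1}\xi)$; in particular these symmetric matrices are orthogonally conjugate and share the same spectrum. Combined with $V(A_t^{-1}\xi)=V(A_t^{-1}\xi)$, this identifies the two eigenvalue tuples. Since $F^*$ depends only on the eigenvalues (it is a symmetric function of them), we conclude
\[
F^*\!\Bigl(\sum_{k,l} w^* b^*_{ik}(\tilde u^*_t)_{kl}\, b^*_{lj}\Bigr)(y) \;=\; F^*\!\Bigl(\sum_{k,l} w^* b^*_{ik}(u^*)_{kl}\, b^*_{lj}\Bigr)(\sigma_t y),
\]
and the latter equals $\psi^*(\sigma_t y, u^*(\sigma_t y))=\psi^*(\sigma_t y, u^*_t(y))$ by \eqref{Legendre}, proving \eqref{Fst}.

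The main obstacle is bookkeeping rather than analysis: one has to correctly identify $\tilde u^*_t/w^*$ with the rotated support function pulled back to $\mathbb{R}^n$ via $P$, so that rotation invariance of the spherical Hessian can be invoked cleanly. The extra conformal factor $\sqrt{1+|y|^2}/\sqrt{1+|\sigma_t y|^2}$ in the definition of $\tilde u^*_t$ is precisely what is required to make this work—without it one would not obtain a solution of an equation of the same structural form. A direct coordinate computation differentiating $\tilde u^*_t$ twice and chasing the Jacobian of $\sigma_t$ would also succeed but would be substantially longer; the conceptual route through Lemma~\ref{4.1} avoids it.
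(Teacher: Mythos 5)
Your proposal is correct and follows essentially the same route as the paper: both identify $\tilde u^*_t/w^*$ with the rotated support function $v\circ A_t^{-1}$ pulled back through $P$, apply Lemma~\ref{4.1} to convert between the graph operator and the spherical Hessian, and use that $A_t$ acts as an isometry of $\mathbb{S}^n$ so that $\nabla^2(v\circ A_t^{-1})+ (v\circ A_t^{-1})I$ at $\xi$ and $\nabla^2 v + vI$ at $A_t^{-1}\xi$ share the same spectrum. The paper merely carries out the frame computation for $\nabla^2_{ij}v_t$ explicitly instead of citing isometry invariance, so no substantive difference remains.
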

\begin{proof}

For the graphic hypersurface $M_u$, suppose its position vector is $X=(\xi,u(\xi))$, where $\xi\in\mathbb{R}^n$.
Suppose $x\in \mathbb{S}^n_+$ is its unit normal. We can view $X$ as a vector depending on $x$. Then the support function of $M_u$ is defined by
$$v(x)=-\langle X(x), x\rangle.$$
Let $X_t=A_t(X)$, where $A_t$ is an orthogonal  matrix. The support function $v_t$ for $X_t$
is given by
$$v_t(x)=-\langle X_t(x), x\rangle.$$
We know that $$(A_t(X))(A_tx)=A_t(X(x)).$$ Thus we get
\begin{eqnarray}\label{vt}
\begin{aligned}
v_t(x)=&-\langle A_t(X)(x), x\rangle \\
=&-\langle A_t(X)(A_tA_t^{-1}(x)), A_tA_t^{-1}x\rangle\nonumber\\
=&-\langle A_t(X(A_t^{-1}(x))), A_t(A_t^{-1}x)\rangle\nonumber\\
=&-\langle X(A_t^{-1}(x)), A_t^{-1}(x)\rangle \nonumber\\
=&\;v(A_t^{-1}(x))\nonumber.
\end{aligned}
\end{eqnarray}
Suppose $\{e_1,\cdots,e_n\}$ is an orthonormal frame of $\mathbb{S}^n$. Then we have
\begin{align}
\nabla_iv_t(x)&=dv_t(x)(e_i)=dv(A_t^{-1}x)(A_t^{-1}e_i),\\
\nabla^2_{ij}v_t(x)&=e_j((v_t)_i)-\nabla_{e_j}e_i v_t\label{D2v}\\
&=d[dv(A_t^{-1}x)(A_t^{-1}e_i)](A_t^{-1}e_j)-dv_t(x)(\nabla_{e_j}e_i)\nonumber\\
&=\nabla^2v(A_t^{-1}x)(A_t^{-1}e_i,A_t^{-1}e_j)+dv(A_t^{-1}x)(\nabla_{A_t^{-1}e_j}A_t^{-1}e_i)\nonumber\\
&-dv(A_t^{-1}x)(A_t^{-1}\nabla_{e_j}e_i)\nonumber\\
&=\nabla^2v(A_t^{-1}x)(A_t^{-1}e_i, A_t^{-1}e_j)\nonumber.
\end{align}
Here $\nabla$ is the canonical connection on $\mathbb{S}^n$ and we have used the fact that $$A_t^{-1}\nabla_{e_j}e_i=\nabla_{A_t^{-1}e_j}(A_t^{-1}e_i).$$
Note that, in the above equality,  the left hand side  represents the derivative taking at the point $x$
and the right hand side represents the derivative taking at the point $A_t^{-1} x$.

	Since $v$ satisfies equation \eqref{main equation hyperbolic},
using \eqref{vt} and \eqref{D2v}, we obtain
	\[
	F^*((v_t)_{i,j} + v_t\delta_{i,j})(x) = \tilde{\psi}(A_t^{-1}x, v_t(x)).
	\]
By \eqref{ust} and \eqref{vt}, we derive
	\
	\begin{align}
		\label{uts}
		\tilde{u}^*_t(y)&=\frac{\sqrt{1+|y|^2}}{\sqrt{1+|\sigma_t y|^2}}u^*_t(y)=\frac{\sqrt{1+|y|^2}}{\sqrt{1+|PA_t^{-1}P^{-1}y|^2}}u^*(PA_t^{-1}P^{-1}y)\\	
		&=\sqrt{1+|y|^2} v(A_t^{-1}P^{-1}y)=\sqrt{1+|y|^2} \, v_t \left(P^{-1} y\right).
		\nonumber
	\end{align}

Applying Lemma \ref{4.1}, we have
	\[
	\sum_{k,l}w^* \, b_{ik}^* (\tilde{u}^*_t)_{kl} \, b_{lj}^* = (v_t)_{i,j} + v_t \delta_{i,j},
	\]
which implies \eqref{Fst}.
		
\end{proof}
	
Suppose the components of $\sigma_t$ are
	\begin{equation}\label{sigmacom}
	\sigma_{t}(y) = (g_1(t,y), \ldots, g_n(t,y)).
	\end{equation}	
Now taking once and twice derivatives with respect to $t$ in \eqref{Fst}, we get
	\begin{equation}\label{D1}
	\sum_{k,l}F^{*ij} w^* \, b_{ik}^* \left( \frac{d}{dt} \tilde{u}_t^* \right)_{kl} b_{lj}^* = \sum_m\psi^*_{y_m}\frac{\p g_m}{\p t}+\psi^*_{u^*} \frac{d}{dt} u_t^* ,
	\end{equation}
and
\begin{align}\label{D2}
		&\sum_{k,l}F^{*ij} w^* b_{ik}^* \left( \frac{d^2}{dt^2} \tilde{u}_t^* \right)_{kl} b_{lj}^* \\
	&= -F^{*ij, rs} \left[\sum_{k,l}w^* b_{ik}^* \left( \frac{d}{dt} \tilde{u}_t^* \right)_{kl} b_{lj}^*\right]\left[ \sum_{p,q}w^* b_{rp}^* \left( \frac{d}{dt} \tilde{u}_t^* \right)_{pq} b_{qs}^* \right]+\psi^*_{u^*u^*}\left(\frac{d u_t^*}{dt}\right)^2\nonumber\\
	&\qquad+\sum_m\psi^*_{y_m}\frac{\p^2 g_m}{\p t^2}+\sum_{m,n}\psi^*_{y_my_n}\frac{\p g_m}{\p t}\frac{\p g_n}{\p t}+2\sum_m\psi^*_{y_m}\psi^*_{u^*}\frac{\p g_m}{\p t} \frac{d}{dt} u_t^*
	+\psi^*_{u^*} \frac{d^2}{dt^2} u_t^* .
	\nonumber	
\end{align}

Set $T$ to be the tangential vector field generated by $\sigma_t$, namely,
	\begin{equation}\label{T}
	T = \sum_{m=1}^n \left. \frac{\partial g_m}{\partial t} \right|_{t=0}\frac{\partial}{\partial y_m}.
	\end{equation}
To proceed, we need the following proposition.
\begin{prop}\label{prop}
Suppose $\{\sigma_t, t\in[0,\epsilon_0]\}$ is an one-parameter transformation group of $\mathbb{R}^n$, namely, for any $0\leq s,t\leq \epsilon_0$,
\begin{equation*}
\sigma_{t+s} = \sigma_t \circ \sigma_s = \sigma_s \circ \sigma_t, \text{ and } \sigma_{0}=id.
\end{equation*}
 For any smooth function $h(y)$, we define $h_t(y)=h(\sigma_t(y))$. Then
 we have
\begin{equation}\label{prop1}
\left. \frac{d}{dt} h_t  \right|_{t=0}(y) = T h(y) \ \  \text{ and }  \ \   \left. \frac{d^2}{dt^2} h_t \right|_{t=0} (y) = T^2 h (y),\end{equation}
where $T$ is defined by \eqref{sigmacom} and \eqref{T}.
\end{prop}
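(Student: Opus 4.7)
The plan is to verify the first identity by a direct chain-rule computation and then to bootstrap from it to the second identity by exploiting the one-parameter group property $\sigma_{t+s} = \sigma_t \circ \sigma_s$. The main point is that if one tries to expand $\frac{d^2}{dt^2}h_t|_{t=0}$ naively via the chain rule, one obtains a term involving $\frac{\partial^2 g_m}{\partial t^2}|_{t=0}$ which is not visibly equal to $T^2 h(y)$; the group structure allows this pure second derivative in $t$ to be traded for a mixed derivative in two independent parameters, reducing everything to two applications of the first identity.

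For the first identity, since $\sigma_0 = \mathrm{id}$, the chain rule gives
\[
\frac{d}{dt}h_t(y)\Big|_{t=0} \;=\; \sum_{m=1}^n \frac{\partial h}{\partial y_m}(y)\,\frac{\partial g_m}{\partial t}\Big|_{t=0} \;=\; (Th)(y).
\]
For the second identity, fix $y$ and set $F(t,s) = h_{t+s}(y) = h(\sigma_{t+s}(y))$. Since $F$ depends only on the sum $t+s$, its pure and mixed second partials at the origin coincide:
\[
\frac{\partial^2 F}{\partial t^2}(0,0) \;=\; \frac{\partial^2 F}{\partial t\,\partial s}(0,0).
\]
Clearly $F(t,0) = h_t(y)$, so the left-hand side equals $\frac{d^2}{dt^2}h_t(y)|_{t=0}$. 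For the right-hand side, rewrite $h_{t+s}(y) = h_t(\sigma_s(y))$ using the group law and differentiate in $t$ first, applying the first identity with base point $\sigma_s(y)$ in place of $y$:
\[
\frac{\partial F}{\partial t}(0,s) \;=\; \frac{d}{dt}h_t(\sigma_s y)\Big|_{t=0} \;=\; (Th)(\sigma_s y).
\]
Now differentiate in $s$ at $s=0$ and apply the first identity a second time, this time to the smooth function $Th$ in place of $h$:
\[
\frac{\partial^2 F}{\partial s\,\partial t}(0,0) \;=\; \frac{d}{ds}(Th)(\sigma_s y)\Big|_{s=0} \;=\; T(Th)(y) \;=\; T^2 h(y).
\]
Combining the two expressions for the mixed partial yields the second identity.

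The main obstacle to watch out for is precisely the temptation to expand $\frac{d^2}{dt^2}h_t|_{t=0}$ directly: the resulting formula contains the unwanted coefficients $\frac{\partial^2 g_m}{\partial t^2}|_{t=0}$ and it is not transparent why they reassemble into $T^2 h$. The group property $\sigma_{t+s} = \sigma_t \circ \sigma_s$ is exactly what removes this obstruction, since it lets $F(t,s)$ depend only on $t+s$ and thereby equates $\partial_t^2 F$ with $\partial_t \partial_s F$ at the origin. Beyond the smoothness already implicit in the setup, no further hypotheses are needed, and the argument uses the first identity twice in a completely mechanical way.
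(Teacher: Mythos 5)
Your proof is correct and is essentially the paper's argument in different packaging: your identity $\partial_t F(0,s) = (Th)(\sigma_s y)$ is exactly the paper's key step $\tfrac{d}{dt}h_t(y) = (Th)(\sigma_t(y))$, obtained from the same use of the group law, and the final differentiation in $s$ matches the paper's second application of the first identity. The two-variable function $F(t,s)$ and the equality of pure and mixed partials is a clean way to organize the same computation, but it is not a different route.
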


\begin{proof}
The first equality of \eqref{prop1} is obvious. To prove the second equality of \eqref{prop1}, we only need to show that
\[
\left. \frac{d^2}{dt^2} h_t \right|_{t=0} (y) = \left. \frac{d}{dt} \left[ T h (\sigma_t (y)) \right] \right|_{t=0},
\]
and use the first equality. Since
\[
\left. \frac{d^2}{dt^2} h_t \right|_{t=0} (y) =\lim_{t \to 0} \frac{1}{t} \left[ \frac{d}{dt} h_t (y) - \left. \frac{d}{dt} h_t (y) \right|_{t=0} \right]\]
and
\[ \left. \frac{d}{dt} \left( T h (\sigma_t (y)) \right) \right|_{t=0} = \lim_{t \to 0} \frac{1}{t} \left[ T h (\sigma_t (y)) - T h (y) \right],
\]
it suffices to prove
\[
\frac{d}{dt} h_t (y)= T h (\sigma_t (y)).
\]
Indeed we have
\begin{align*}
	\frac{d}{dt} h_t (y)&=\lim_{s \to 0} \frac{1}{s} \left[ h (\sigma_t(\sigma_s(y))) - h (\sigma_t(y)) \right] \\
&= \lim_{s \to 0} \frac{1}{s} \left[ h (\sigma_s (\sigma_t (y))) - h (\sigma_t (y)) \right]\\
&=\left. \frac{d}{ds} h_s (\sigma_t (y)) \right|_{s=0} \\
&= T h (\sigma_t (y)).	
\end{align*}

\end{proof}
Using \eqref{prop1}, we get	
	\begin{align}\label{newut}
	\left.\frac{d}{dt} \tilde{u}_t^*\right|_{t=0} &=w^*\left.\frac{d}{dt} \frac{u_t^*}{\sqrt{1+|\sigma_t(y)|^2}}\right|_{t=0}=w^*T\frac{u^*}{w^*},\\	
	\left.\frac{d^2}{dt^2} \tilde{u}_t^*\right|_{t=0} &=w^*\left.\frac{d^2}{dt^2} \frac{u_t^*}{\sqrt{1+|\sigma_t(y)|^2}}\right|_{t=0}=w^*T^2\frac{u^*}{w^*}.\nonumber
        \end{align}
Thus, letting $t=0$ in \eqref{D1}, \eqref{D2}, then using \eqref{newut}, Proposition \ref{prop} and the concavity of $F^*$, we obtain the following lemma:
\begin{lem}\label{lem4.3}
Suppose $u^*$ is the solution of \eqref{Legendre}. Let $T$ be a vector field generated by some one parameter transformation group $\{\sigma_t, t\in[0,\epsilon_0]\}$ of $\mathbb{R}^n$. Namely  $T$ is defined by \eqref{sigmacom} and \eqref{T}.
Then we have
	\begin{align}
\label{T2}
		\\
		\sum_{k,l}F^{*ij} w^* \, b_{ik}^* \left(w^*T \frac{u^*}{w^*}\right)_{kl} b_{lj}^*&= T\psi^*+\psi^*_{u^*}Tu^*, \nonumber\\
		\sum_{k,l}F^{*ij} w^* \, b_{ik}^* \left(w^*T^2 \frac{u^*}{w^*}\right)_{kl} b_{lj}^* &\geq  T^2\psi^*+2\psi^*_{u^*}T\psi^*Tu^*+\psi^*_{u^*}T^2u^*+\psi^{*}_{u^*u^*}(Tu^*)^2.\nonumber
	\end{align}

\end{lem}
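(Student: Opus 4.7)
The plan is to obtain both relations by differentiating \eqref{Fst} in $t$ and evaluating at $t=0$, with no input beyond the chain rule, the identifications in \eqref{newut} and Proposition \ref{prop}, and the concavity of $F^*$. First I differentiate \eqref{Fst} once in $t$ to obtain \eqref{D1}, and then twice in $t$ to obtain \eqref{D2}; both are direct chain-rule calculations, expanding the mixed $y$- and $u^*$-dependence of the composition $\psi^*(\sigma_t y, u_t^*(y))$ and picking up the Hessian $F^{*ij,rs}$ of $F^*$ in the second derivative.

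Setting $t=0$, I use \eqref{newut} to rewrite the inner terms on the left-hand sides as $w^*T(u^*/w^*)$ in \eqref{D1} and $w^*T^2(u^*/w^*)$ in \eqref{D2}. On the right-hand sides, Proposition \ref{prop} (applied with $h=u^*$, and separately with $h=\psi^*$ viewed as a function of $y$ alone with the $u^*$-slot treated as a passive parameter) consolidates $\tfrac{d}{dt}u_t^*|_{t=0}=Tu^*$, $\tfrac{d^2}{dt^2}u_t^*|_{t=0}=T^2u^*$, and the corresponding $g_m$-sums into $T\psi^*$ and $T^2\psi^*$. This collapse of the chain-rule pieces produces the first identity of the lemma verbatim.

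For the second relation the same substitution in \eqref{D2} at $t=0$ produces the stated right-hand side plus the single leftover term $-F^{*ij,rs}[\cdots]_{ij}[\cdots]_{rs}$. Concavity of $F^*$ makes its Hessian $F^{*ij,rs}$ negative semidefinite, so this quadratic form is nonpositive and its negative is therefore nonnegative; dropping this contribution turns the equality into the $\geq$ claimed. There is no real obstacle here — once Lemma \ref{4.1}, the equation \eqref{Fst}, the identities \eqref{newut}, Proposition \ref{prop}, and concavity of $F^*$ are in hand, the lemma reduces to bookkeeping. The only convention worth flagging is that $T\psi^*$ and $T^2\psi^*$ denote the action of $T$ and $T^2$ on $\psi^*(\cdot,u^*)$ in the $y$-slot only, so that chain-rule contributions from the $u^*$-slot appear separately in the $\psi^*_{u^*}Tu^*$, $\psi^*_{u^*}T^2u^*$, $\psi^*_{u^*u^*}(Tu^*)^2$ and cross terms, exactly matching the coefficients on the stated right-hand side.
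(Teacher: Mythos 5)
Your proposal is correct and follows the paper's own route exactly: differentiate \eqref{Fst} once and twice in $t$ to get \eqref{D1}--\eqref{D2}, evaluate at $t=0$ using \eqref{newut} and Proposition \ref{prop}, and discard the term $-F^{*ij,rs}[\cdots][\cdots]$, which is nonnegative by concavity of $F^*$, to pass from equality to the stated inequality. Your closing remark on the interpretation of $T\psi^*$, $T^2\psi^*$ acting only in the $y$-slot is exactly the convention the paper uses.
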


Now we construct a ``canonical" vector field generated by rotations. Let $dP$ be the differential of the map $P$. We would like to construct a vector field near any given boundary point $y_0$, which parallel any given tangential vector at $y_0$.
\begin{prop}\label{propT}
Given some boundary point $y_0\in\p\Omega^*$ and some tangential vector $\xi=(\xi_1,\cdots,\xi_n)$ of $\p\Omega^*$ at $y_0$. We can construct a vector field $T=\sum_mT_m\frac{\p}{\p y_m}$ near $y_0$. It satisfies
\begin{equation}\label{5.2}
T_m(y_0)=\sqrt{1+|y_0|^2}\xi_m
\end{equation}
and
\begin{equation}\label{5.21}
|T(y)|^2=\sum_mT_m^2(y)\leq 1+|y|^2 \text{ for any } y.
\end{equation}
Here the equality of \eqref{5.21} holds when $y=y_0$.
Moreover,  \eqref{T2} holds for the special $T$ constructed here.
\end{prop}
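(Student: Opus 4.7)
The plan is to build $T$ as the infinitesimal generator of a conjugated one-parameter subgroup $\sigma_t = PA_t^{-1}P^{-1}$, where $A_t = \exp(tX)$ for a single carefully chosen $X\in\mathfrak{so}(n+1)$. Since $\sigma_t$ is then automatically a one-parameter transformation group of $\mathbb{R}^n$ (near $y_0$), Proposition \ref{prop} and Lemma \ref{lem4.3} give \eqref{T2} without further work; the whole problem reduces to picking $X$ so that \eqref{5.2} and \eqref{5.21} hold.

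First I would lift the data to the sphere. Set $x_0 := P^{-1}(y_0) = (-y_0,1)/\sqrt{1+|y_0|^2}$ and note that the equality case of \eqref{5.21} at $y=y_0$, combined with \eqref{5.2}, forces $|\xi|=1$, so we may assume this. Differentiating $\sigma_t$ at $t=0$ and using the chain rule gives
\[
T_i(y) = \sqrt{1+|y|^2}\,\bigl[(Xx)_i + y_i\,(Xx)_{n+1}\bigr], \qquad x = P^{-1}(y),
\]
where all quantities are evaluated at the lift $x$. Requiring this to equal $\sqrt{1+|y_0|^2}\xi_i$ at $y=y_0$ uniquely determines the vector $\zeta := Xx_0 \in T_{x_0}\mathbb{S}^n$: solving the linear system (which involves inverting $I + y_0y_0^T$) yields
\[
\zeta_i = \xi_i - \frac{(y_0)_i\,(y_0\cdot\xi)}{1+|y_0|^2}\;\;(i\le n),\qquad \zeta_{n+1} = \frac{y_0\cdot\xi}{1+|y_0|^2},
\]
and one checks directly that $\zeta\perp x_0$ and $|\zeta|^2 = 1 - (y_0\cdot\xi)^2/(1+|y_0|^2) \le 1$.

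Next I would take $X$ to be the minimal-norm skew extension of $x_0\mapsto\zeta$: namely the rank-two skew matrix acting as a rotation in the $2$-plane $\Pi := \mathrm{span}\{x_0,\zeta\}$ with angular speed $|\zeta|$, and vanishing on $\Pi^\perp$. Then $\|X\|_{\mathrm{op}} = |\zeta|\le 1$ and $A_t = \exp(tX)$ is a one-parameter subgroup of $SO(n+1)$; for small $|t|$, $\sigma_t = PA_t^{-1}P^{-1}$ is a diffeomorphism defined in a neighborhood of $y_0$. The boundary identity \eqref{5.2} then follows by construction, and \eqref{T2} follows from Lemma \ref{lem4.3}.

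The main obstacle is verifying the norm bound \eqref{5.21}. Using the orthogonality $\zeta\cdot x_0=0$ (equivalently $\zeta_{n+1}=\zeta'\cdot y_0$, where $\zeta'=(\zeta_1,\dots,\zeta_n)$) together with the rank-two decomposition $Xx = (x_0\cdot x)\zeta - |\zeta|(\hat\zeta\cdot x)x_0$, I would derive the closed-form expressions
\[
(1+|y|^2)|Xx|^2 = \frac{|\zeta|^2(1+y_0\cdot y)^2}{1+|y_0|^2} + \bigl[\zeta'\cdot(y_0-y)\bigr]^2,
\qquad
(Xx)_{n+1} = \frac{y\cdot\xi}{\sqrt{(1+|y_0|^2)(1+|y|^2)}},
\]
and substitute into $|T|^2 = (1+|y|^2)|Xx|^2 + (1+|y|^2)^2(Xx)_{n+1}^2$. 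The substitution gives an explicit polynomial in $y$ whose value at $y_0$ is $1+|y_0|^2$ (yielding the equality clause), and whose comparison with $1+|y|^2$ reduces, after expansion and use of $|\xi|=1$, to an elementary algebraic inequality in $y,y_0,\xi$ that can be verified by Cauchy--Schwarz and completing squares. The fact that $|\zeta|\le 1$ and that $X$ is the minimal-norm extension is essential here: any skew perturbation of $X$ annihilating $x_0$ would enlarge $\|X\|_{\mathrm{op}}$ and, as a direct calculation of the Hessian of $|T|^2-(1+|y|^2)$ at $y_0$ shows, destroy the sharpness at $y_0$. This algebraic verification is the hardest step; everything else is a direct unwinding of the definitions combined with Lemmas \ref{4.1} and \ref{lem4.3}.
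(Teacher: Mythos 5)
Your construction is essentially the paper's: $T$ is the $P$-conjugate of the Killing field of a rotation in the $2$-plane spanned by $x_0=P^{-1}(y_0)$ and the lift of $\xi$, so that \eqref{T2} indeed comes for free from Lemma \ref{lem4.3}; your normalization $Xx_0=\zeta$, obtained by solving the linear system coming from \eqref{5.2}, is if anything more careful than the paper's choice of a unit vector $e_1$, and your formulas for $T_i(y)$, for $\zeta$, for $(Xx)_{n+1}$ and for $(1+|y|^2)|Xx|^2$ all check out.

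The genuine gap is the one step you do not carry out: the ``elementary algebraic inequality'' to which you reduce \eqref{5.21} is false, so no amount of Cauchy--Schwarz will verify it. Your own (correct) identity $|T|^2=(1+|y|^2)|Xx|^2+(1+|y|^2)^2(Xx)_{n+1}^2$ together with $(Xx)_{n+1}=\frac{y\cdot\xi}{\sqrt{(1+|y_0|^2)(1+|y|^2)}}$ shows that \eqref{5.21} is equivalent to $|Xx|^2+\frac{(y\cdot\xi)^2}{1+|y_0|^2}\le 1$, and the second summand is not controlled. Concretely, take $y_0=0$ (so $x_0=E_{n+1}$, $\zeta=(\xi,0)$, and your minimal-norm $X$ coincides with the paper's) and $y=\lambda\xi$: then $\sigma_t(\lambda\xi)=\tan(t+\arctan\lambda)\,\xi$ up to sign, so $T(\lambda\xi)=(1+\lambda^2)\xi$ and $|T(y)|^2=(1+\lambda^2)^2>1+\lambda^2=1+|y|^2$ for every $\lambda\neq 0$, with defect $\lambda^2+O(\lambda^4)$; the failure is already of second order at $y_0$, so it cannot be removed by shrinking the neighborhood or by any rescaling of $X$ compatible with \eqref{5.2}. (For comparison, the paper's proof of \eqref{TTT} reaches the bound only through the identity $\tilde e_m\cdot\tilde e_k=\delta_{km}$, whereas a direct computation gives $\delta_{km}+2y_ky_m$; the term thereby dropped is exactly your $(1+|y|^2)^2(Xx)_{n+1}^2$.) So the obstruction you deferred is real: for rotation-generated fields normalized by \eqref{5.2} one only gets the weaker bound $|T(y)|^2\le(1+|y|^2)\bigl(1+C|y-y_0|^2\bigr)$, which is the form actually needed for the quadratic error term in \eqref{B4}--\eqref{B5}; as written, neither your argument nor the paper's establishes \eqref{5.21} itself.
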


\begin{proof}
Let $x_0 = P^{-1}(y_0)$ and $e_1$ be the direction of $(dP)^{-1}(\xi)$.
We expand $e_1$ to some orthonormal vectors $\{e_1, e_2, \ldots, e_{n-1}, e_n\}$ such that $\{e_1,\cdots,e_{n-1}\}$ can span the tangential space of $\p\tilde{\Omega}$ at $x_0$.
For any $x\in \mathbb{S}^n_+$, suppose
$$x= a_0(x) x_0 + \sum_{i=1}^n a_i(x)e_i, $$
where $a_0(x),a_1(x),\cdots, a_n(x)$ are the components of $x$.

Given constant $\epsilon_0>0$, for any $0\leq t\leq \epsilon_0$, we define a family of  rotations:
\begin{equation*}
	\begin{aligned}
		A_t (x_0) &= \cos t \, x_0 + \sin t \, e_1, \\
		A_t (e_1) &= -\sin t \, x_0 + \cos t \, e_1, \\
		A_t (e_i) &= e_i, \quad \text{for } i \geq 2.
	\end{aligned}
\end{equation*}
Then, we get
\begin{equation*}
	A_t (x) = a_0 A_t (x_0) + \sum_{i=1}^n a_i A_t (e_i).
\end{equation*}
Next we let $$\sigma_t (y) = P A_t P^{-1} (y) = (g_1(t,y), \cdots, g_n(t,y)),$$
and define the vector field
\begin{equation*}
	T = \sum_{i=1}^n \frac{\partial g_m}{\partial t} \Bigg|_{t=0} \frac{\partial}{\partial y_m}.
\end{equation*}
Suppose $x=P^{-1} (y) = (\tilde{x}, x_{n+1})$, then we see that
\begin{equation*}
	x_{n+1} =\frac{1}{\sqrt{1+\lvert y \rvert^2}}, \quad
	\tilde{x} =- \frac{y}{\sqrt{1+\lvert y \rvert^2}}.
\end{equation*}
Thus we have
\begin{equation*}
	a_0 (y) = \langle P^{-1} (y), x_0\rangle, \quad a_i (y) = \langle P^{-1} (y), e_i\rangle.
\end{equation*}
Therefore
\begin{equation*}
	A_t  P^{-1} (y) = (a_0(y) \cos t - a_1(y) \sin t) \, x_0 + (a_0(y) \sin t + a_1(y) \cos t) \, e_1 + \sum_{i=2}^n a_i(y) e_i.
\end{equation*}
Suppose $(y_1,\cdots,y_n)$ are the rectangular coordinate. We let
$E_i=\frac{\p}{\p y_i}$ for $i=1,\cdots,n+1$.
Thus we get
\begin{equation*}
	g_m (t,y) = -\frac{\langle A_t \, P^{-1} (y), E_m\rangle}{\langle A_t \, P^{-1} (y), E_{n+1}\rangle}.
\end{equation*}
Then, taking derivative with respect to $t$,  we have
\begin{equation*}
	\frac{d}{dt} A_t \, P^{-1} (y) \Bigg|_{t=0} = - a_1 (y) \, x_0 + a_0 (y) e_1
\end{equation*}
and
\begin{align}\label{5.4}
	&T_m(y) =\left. \frac{\partial g_m}{\partial t} \right|_{t=0} \\
	&= -\frac{\left.\frac{d}{dt} \langle A_t P^{-1}(y), E_m\rangle \right|_{t=0}}{\langle P^{-1}(y), E_{n+1}\rangle}
	 +\frac{\langle A_t P^{-1}(y),E_m\rangle}{ \langle A_t P^{-1}(y), E_{n+1} \rangle^2}
	\left. \frac{d}{dt} \langle A_t P^{-1}(y), E_{n+1}\rangle \right|_{t=0}\nonumber\\
&	= 
	\frac{a_1(y) \, \langle x_0,E_m\rangle - a_0(y) \, \langle e_1,E_m\rangle}{\frac{1 }{\sqrt{1 + |y|^2}} }  -y_m\sqrt{1 + |y|^2}
	\left[ -a_1(y) \, \langle x_0, E_{n+1}\rangle + a_0(y) \, \langle e_1, E_{n+1}\rangle \right]\nonumber\\
&=
\sqrt{1 + |y|^2} \left\{
\langle P^{-1}(y), e_1\rangle
( \langle x_0, E_m\rangle + y_m \langle x_0, E_{n+1}\rangle \right)\nonumber\\
&\qquad\qquad  -\langle P^{-1}(y), x_0\rangle \left( \langle e_1, E_m\rangle + y_m \langle e_1, E_{n+1} \rangle)
\right\}.\nonumber
\end{align}
Indeed it is a polynomial about  $y$ with degree 2.

It is clear that
\[
dP^{-1} \left( \frac{\partial}{\partial y_k} \right) = \frac{\p}{\p y_k} P^{-1}=
-\frac{1}{\sqrt{1 + |y|^2}} E_k - \frac{ y_k}{1 + |y|^2} P^{-1}(y).
\]
Then we can calculate
\[
\langle P^{-1}(y), dP^{-1}\left( \frac{\partial}{\partial y_k} \right) \rangle=  \frac{y_k}{1+|y|^2}  - \frac{y_k}{1+|y|^2}  = 0.
\]
Next we have
\begin{eqnarray*}
	\langle dP^{-1} \left( \frac{\partial}{\partial y_k} \right), ( E_m + y_m E_{n+1})\rangle = -\frac{1}{\sqrt{1+|y|^2}}  \delta_{mk}.
	\end{eqnarray*}
Letting $\tilde{\xi}=(\tilde{\xi}_1,\cdots,\tilde{\xi}_n) =dP(e_1)$ and using \eqref{5.4} and $\langle x_0, e_1\rangle=0$, we have

\begin{align}\label{newTm}
	T_m(y_0) &= -\sqrt{1+|y_0|^2} \left( \langle e_1,E_m\rangle + (y_0)_m \langle e_1,E_{n+1}\rangle \right)\\
	&= -\sqrt{1+|y_0|^2} \langle dP^{-1} (\tilde{\xi}), \left( E_m + (y_0)_m E_{n+1} \right)\rangle \nonumber\\
	&= -\sqrt{1+|y_0|^2} \sum_k \tilde{\xi}_k \langle dP^{-1} \left( \frac{\partial}{\partial y_k} \right), \left( E_m + (y_0)_m E_{n+1} \right)\rangle \nonumber\\
	&= \sum_k \tilde{\xi}_k \delta_{mk}=\tilde{\xi}_m.\nonumber
\end{align}
Here we let $y_0=((y_0)_1,(y_0)_2,\cdots, (y_0)_n)$.

Let us first prove \eqref{5.21}. Since we have
\begin{equation}\label{4.101}
P^{-1}(y)=a_0(y)x_0+\sum_{i\geq 1}a_i(y)e_i,
\end{equation}
where $a_0(y_0)=1$ and $a_i(y_0)=0$ for $i\geq 1$.
Define the function
\[
\tilde{\eta} (y) = a_1(y) x_0 - a_0(y) e_1.
\]
Then, using \eqref{5.4},  we get
\begin{eqnarray*}
T_m(y) 
&=& \sqrt{1+|y|^2}\langle \tilde{\eta}(y), (E_m + y_m E_{n+1})\rangle.
\end{eqnarray*}
We denote
\[
\tilde{e}_m = E_m + y_mE_{n+1} - y_m P^{-1}(y).
\]
Then, in view of $\langle\tilde{\eta}, P^{-1}(y) \rangle= 0
$, we have
\[
\langle\tilde{\eta}(y),\tilde{e}_m \rangle= \langle\tilde{\eta}(y), (E_m + y_m E_{n+1})\rangle.
\]
This allows us to conclude that
\[
T_m(y) = \sqrt{1+|y|^2} \langle\tilde{\eta}(y), \tilde{e}_m\rangle.
\]
Using the fact that
$\langle P^{-1}(y),(E_m+y_mE_{n+1})\rangle=0,$
we find that
\[
\tilde{e}_m \cdot \tilde{e}_{k}
= \delta_{km} + y_k y_m  - y_m y_k = \delta_{km}.
\]
Thus we can derive
\begin{align}\label{TTT}
\sum_{m=1}^{n} T_m^2  &=\left(1+|y|^2 \right) \sum_{m=1}^{n} \langle\tilde{\eta}, \tilde{e}_m\rangle^2 = \left( 1+|y|^2 \right) |\tilde{\eta}|^2 \\
&= \left( 1+|y|^2 \right) (a_0^2(y) + a_1^2(y))\leq 1+|y|^2,\nonumber
\end{align}
where the equality holds when $y=y_0$. Here we have used $a_0^2(y)+a^2_1(y)\leq |P^{-1}(y)|^2=1$. This completes the proof of \eqref{5.21}.
Next, from \eqref{newTm} and noting that $\tilde{\xi}$ is parallel to  $\xi$ and $|\xi|=1$, we obtain \eqref{5.2}.

Finally, for any $0\leq t,s\leq \epsilon_0$, since $A_{t+s} = A_t \cdot A_s = A_s \cdot A_t$ and $A_0=I$, we get
\[
\sigma_{t+s} = \sigma_t \circ \sigma_s = \sigma_s \circ \sigma_t, \text{ and } \sigma_0=id.
\]
Therefore, $\{\sigma_t, t\in[0,\epsilon_0]\}$ is an one-parameter transformation group.
Thus, we get \eqref{T2}.

\end{proof}

\section{$C^2$ estimates}
In this section, we continue to use the notations in Section \ref{Pre}. The main result of this section is  the following proposition.
\begin{prop}Suppose $\Omega$, $\Omega^*$ are bounded strictly convex domains with smooth boundary in $\mathbb{R}^{n}$.
 If $u^*$ is a strictly convex solution to \eqref{Legendre} with $\Omega=Du^*(\Omega^*)$,  then we have the $C^2$ estimate
\begin{equation*}
|D^2u^*|\leq C,
\end{equation*}
where $C$ is a constant depending on $\Omega^*$, $\sup_{\Omega^*}|u^*|$ and $\sup_{\Omega^*}|Du^*|$.
\end{prop}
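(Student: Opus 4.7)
The plan is to split the estimate into an interior (global) bound and a boundary bound, then combine them via the maximum principle. Because $F^*$ is concave and monotone on $\mathcal{S}_+$, controlling the largest eigenvalue of the matrix $W^* = \bigl(w^*\sum_{k,l} b^*_{ik} u^*_{kl} b^*_{lj}\bigr)$ is enough to control $|D^2 u^*|$: a positive lower bound on the smallest eigenvalue follows from the Newton--Maclaurin inequality applied to $F^*(W^*) = \psi^*$ together with the already-known $C^0$ and $C^1$ estimates. For the interior piece I would take a Pogorelov-type test function such as $M(y,\xi) = \log\langle W^*\xi,\xi\rangle + \phi(|Du^*|^2) + \alpha u^*(y)$ on $\bar\Omega^* \times \mathbb{S}^{n-1}$; the concavity of $F^*$ and the standard maximum-principle computation force an absolute bound whenever the maximum is attained in the interior, so the problem reduces to controlling $|D^2 u^*|$ on $\partial\Omega^*$.

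At a boundary point $y_0 \in \partial\Omega^*$, rotate coordinates so that $e_n$ is the interior unit normal, and split the task into three subcases. The tangential--tangential part uses only the boundary condition: differentiating $h(Du^*(y)) = 0$ twice along a unit tangent $\tau$ to $\partial\Omega^*$ (with $h$ the concave defining function of $\Omega$) yields
\begin{equation*}
\sum_i h_{p_i}(Du^*)\, u^*_{i\tau\tau}(y_0) = -\sum_{i,j} h_{p_i p_j}(Du^*)\, u^*_{i\tau}(y_0)\, u^*_{j\tau}(y_0),
\end{equation*}
and the strict obliqueness $\langle Dh,\nu^*\rangle \geq c_0 > 0$ from Lemma \ref{3.4} converts this into $|u^*_{\tau\tau}(y_0)| \leq C$. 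Once the tangential block and the mixed entries of $W^*$ are under control, the remaining diagonal entry $u^*_{nn}$ is bounded from above by inverting $F^*(W^*) = \psi^*$, using the $\sigma_n/\sigma_{n-k}$ structure of $F^*$.

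The decisive step is the tangential--normal estimate, where the rotation vector fields of Section 5 enter. Given a unit tangent $\xi$ at $y_0$, construct $T = T_\xi$ via Proposition \ref{propT}, so that $T(y_0)$ is parallel to $\xi$ and $|T(y)|^2 \leq 1+|y|^2$ with equality at $y_0$. Consider an auxiliary function of the form
\begin{equation*}
\Phi(y) = \pm\Bigl[\, w^*(y)\, T(u^*/w^*)(y) - w^*(y_0)\, T(u^*/w^*)(y_0) \Bigr] + A\,\varphi^*(y) + B\,|y-y_0|^2,
\end{equation*}
with $\varphi^*$ the barrier from \eqref{boundary*} and $A, B$ large to be determined. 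Applying $\mathcal{L}^*$ and invoking \eqref{T2} of Lemma \ref{lem4.3}, the quantity $\mathcal{L}^*[w^* T(u^*/w^*)]$ is bounded purely by the $C^1$ data, because the identity there has right-hand side involving only $T\psi^*$ and $\psi^*_{u^*} T u^*$. Combined with the barrier inequality $\mathcal{L}^*\varphi^* \leq -c_4 \mathcal{T}^*$ from Section 4, this gives $\mathcal{L}^*\Phi \leq 0$ in a half-neighborhood $\Omega^*_r$; on $\partial_1 \Omega^*_r$ the tangential--tangential bound forces $\Phi \geq 0$, while on the remaining portions of $\partial\Omega^*_r$ the barrier and the quadratic term dominate for $A, B$ large. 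The maximum principle then yields $\partial_n \Phi(y_0) \geq 0$, which, since $T(y_0)$ is parallel to $\xi$, translates into $|u^*_{\xi n}(y_0)| \leq C$ for every unit tangent $\xi$.

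The main obstacle is precisely this tangential--normal step. A naive differentiation of \eqref{Legendre} in a coordinate direction, combined with any standard Pogorelov-type test function, generates through the quotient structure of $F^*$ an uncontrollable term proportional to $-\mathcal{T}^* u^*_{nn}$ (with $\mathcal{T}^* = \sum_i F^{*ii}$) plus lower-order error that no admissible barrier can absorb; this is exactly the point where the classical approach of \cite{Urbas2001, Urbas2002} breaks down for the $k$-Hessian curvature equation. Differentiating instead along the rotation-generated $T$ exploits the orthogonal invariance of the hypersurface, and Lemma \ref{lem4.3} produces a linearized equation whose right-hand side depends only on $T\psi^*$, $T^2\psi^*$, $Tu^*$, $T^2 u^*$, all a priori bounded. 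Once this is in place, the remaining boundary and interior arguments follow the familiar Pogorelov--Urbas template.
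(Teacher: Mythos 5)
There is a genuine gap: you have the roles of the boundary estimates reversed, and the step you dismiss as easy is in fact the hard one that the rotation vector fields were invented for. Differentiating the boundary condition $h(Du^*)=0$ \emph{twice} along a tangent $\tau$ does not give $|u^*_{\tau\tau}(y_0)|\leq C$. The correct identity (cf.\ \eqref{LL2} in the paper) is
\begin{equation*}
D_{\tau\tau\beta}u^* \;+\; \sum_{k,l}h_{p_kp_l}\,D_{\tau k}u^*\,D_{\tau l}u^* \;+\; II(\tau,\tau)\,D_{\nu^*\beta}u^* \;=\;0 ,
\end{equation*}
which relates a \emph{third} derivative $D_{\tau\tau\beta}u^*$ to the positive quadratic form $-h_{p_kp_l}D_{\tau k}u^*D_{\tau l}u^*\geq \theta\,(D_{\tau\tau}u^*)^2$. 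Obliqueness does not let you solve this for $u^*_{\tau\tau}$; you first need an a priori \emph{upper} bound on $D_{\tau\tau\beta}u^*$, and obtaining that bound is precisely where the whole maximum-principle machinery with the rotation field $T$, the second-derivative identity in Lemma \ref{lem4.3} (the $T^2$ inequality, not the first-order one), and the barriers must be deployed. By contrast, the tangential--oblique estimate that you treat as the decisive step is trivial for the second boundary value problem: differentiating $h(Du^*)=0$ \emph{once} tangentially gives $D_{\tau\beta}u^*=0$ on $\partial\Omega^*$ exactly (this is \eqref{tn}), and no auxiliary function is needed. Your auxiliary function built from $w^*T(u^*/w^*)$ only uses the first identity of \eqref{T2}, i.e.\ a single differentiation of the equation, where the problematic term $-C\sum_iF^{*ii}\Delta u^*$ never arises in the first place; so your argument, even if completed, would reprove an estimate that is free, while leaving the double tangential bound unproved.

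Two further points. First, your recovery of the remaining normal--normal entry ``by inverting $F^*(W^*)=\psi^*$'' fails for the quotient operator: as the largest eigenvalue of $W^*$ tends to infinity with the others fixed, $\sigma_n/\sigma_{n-k}$ tends to a finite limit, so the equation alone does not bound it. The paper instead proves $0\leq D_{\beta\beta}u^*\leq C(\epsilon)+\epsilon M$ by a barrier argument (\eqref{nn}) and closes the loop by combining this with the global estimate and the double tangential estimate, choosing $\epsilon$ small at the very end. Second, your interior reduction via a Pogorelov-type function is a plausible alternative to the paper's spherical-Hessian argument (which uses the test function $\Lambda_{11}$ on $\mathbb{S}^n_+$ and the sign $\tilde\psi_v\geq 0$), but as written it is only a sketch. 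The essential missing content is the double tangential boundary estimate, and the argument you offer for it is not valid.
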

\subsection{The global $C^2$ estimate}

To establish the estimate in $\Omega^*$, we utilize the equation \eqref{main equation hyperbolic}. Let $\tilde{\Omega} = P^{-1}(\Omega^*)$.
Suppose $\{e_1,\cdots, e_n\}$ is an orthonormal frame on $\mathbb{S}^n_+$. The spherical Hessian $\Lambda_{ij} $ defined by \eqref{sphere} satisfies 	
	\[
	\nabla_k\Lambda_{ij} =\nabla_j \Lambda_{ik}
	\]
and
	\[
	\nabla_{jj}\Lambda_{ii} -\nabla_{ii} \Lambda_{jj} = -\Lambda_{jj} + \Lambda_{ii}.
	\]
We consider the following problem
\[
	\sup_{x\in \tilde{\Omega},\ \eta \in T_{x} \mathbb{S}^n, |\eta|=1} \Lambda_{\eta\eta}(x).
	\]
Without loss of generality, we can assume $x_0, e_1$ are the maximum point and direction of the above problem. Moreover we can assume
$\Lambda_{ij}$ is a diagonal matrix at $x_0$.
Consequently, we have
	\[
	\Lambda_{\eta \eta} \leq \Lambda_{11}(x_0).
	\]
Next we will consider the test function:
	\[
	\varphi = \Lambda_{11}.
	\]
By using \eqref{main equation hyperbolic}, we have
$$F^{*ii}\Lambda_{ii11}=\nabla_{11}\tilde{\psi}+2(\nabla_1\tilde{\psi})\tilde{\psi}_{v}\nabla_1v+\tilde{\psi}_{v}\nabla_{11}v+\tilde{\psi}_{vv}(\nabla_1v)^2.$$
Since $|X|^2=|x|^2+u^2=|\nabla v|^2+v^2$, the above equation gives
$$F^{*ii}\Lambda_{ii11}\geq \tilde{\psi}_{v}\Lambda_{11}-C,$$ where $C$ depends on $\Omega^*$, $\sup_{\Omega^*}|u^*|$ and $\sup_{\Omega^*}|Du^*|$. Then we have
\begin{align*}
		F^{*ij} \varphi_{ij} &= F^{*ii} \Lambda_{11ii} = F^{*ii} \Lambda_{ii11} + \sum_i F^{*ii} \Lambda_{11} - \sum_i F^{*ii} \Lambda_{ii}\\
	&\geq\Lambda_{11} \sum_i F^{*ii} +\tilde{\psi}_v\Lambda_{11}- C.
\end{align*}

By \eqref{tildepsi} and \eqref{Cond1}, we have
 $$\tilde{\psi}_v=-\frac{\psi_z}{\psi^2}\geq 0.$$
Combining with $\sum F^{*ii} \geq F^*=\tilde{\psi}$, we get
	\[
	\Lambda_{11}(x_0) \leq C.
	\]
Otherwise, $x_0$ is on $\partial \tilde{\Omega}$. Therefore, we obtain
\[
\sup_{\Omega^*} \sum_{k,l}w^* b_{ik}^* D_{kl}^2 u^* b_{jl}^* \leq C + \sup_{\partial \Omega^*} \sum_{k,l}w^* b_{ik}^* D_{kl}^2 u^* b_{lj}^*,
\]
which implies
\begin{equation}\label{5.1}
\sup_{\Omega^*} |D^2 u^*| \leq C \left( 1 + \sup_{\partial \Omega^*} |D^2 u^*| \right).
\end{equation}

\subsection{$C^2$ boundary estimate}
Suppose $h^*(Du^*)= 0$ is the uniformly  concave defining function of $\partial \Omega$. Let $\beta=(h^*_{p_1}, \ldots, h^*_{p_n})$. Denote $\nu$ as the interior unit normal of $\p\Omega$. Thus we have $\beta(y)=\nu(Du^*(y))$. By Lemma \ref{3.4}, it follows that  $\beta \cdot \nu^* > 0$, where $\nu^*$ is the interior unit normal of $\p\Omega^*$.

\textbf{Step 1}: The Tangential-Normal Estimates.

We first establish that
\begin{equation}\label{tn}
D_{\tau\beta} u^* = 0, \ \ \text{on}\ \  \partial \Omega^*,
\end{equation}
 where $\tau$ is any tangential vector field of $\partial \Omega^*$.

\textbf{Step 2}: The Double Normal Estimates.

Let $H^* = h^*(Du^*)$. We compute the derivatives:
\[
D_i H^* = \sum_kh^*_{p_k} D_{i k} u^* \quad \text{and} \quad D_{ij} H^* = \sum_{k}h^*_{p_k} D_{ij k} u^* +\sum_{k,l} h^*_{p_k p_l} u^*_{i k} u^*_{j l}.
\]
For any function $f$, we still denote
\begin{equation}\label{L*}
\mathcal{L^*} f = \sum_{k,l}F^{*ij} w^*\, b_{ik}^* f_{kl} b_{lj}^*.
\end{equation}
Taking derivative with respect to $\frac{\p}{\p y_k}$ in \eqref{Legendre}, we get
\begin{eqnarray*}
\begin{aligned}
\sum_{p,q}F^{*ij} w^* \, b_{i p}^* u^*_{pq k} b_{qj}^* + \frac{y_k}{w^*} \sum_{p,q}F^{*ij} b_{i p}^*u^*_{pq} b_{qj}^*+2 \sum_{p,q}F^{*ij} w^*  \frac{\partial b^*_{ip}}{\partial y_k}  u_{pq}^* b_{qj}^*= \psi^*_{y_k}+\psi^*_{u^*}u_k^*.
\end{aligned}
\end{eqnarray*}
Using \eqref{L*}, the above equation reduces to
\begin{eqnarray*}
 \psi^*_{y_k}+\psi^*_{u^*}u_k^* =\mathcal{L}^* u_k^* + \frac{y_k}{w^{*2}} \psi^*+ 2\sum_{p,q} F^{*ij} w^* \frac{\partial b^*_{ir}}{\partial y_k} ( b^*)^{rs} b^*_{s p}u^*_{pq} b_{qj}^*.
\end{eqnarray*}
It is clear that
\[
\frac{\partial b^*_{ir}}{\partial y_k} = \frac{y_i \delta_{k r} + y_{r} \delta_{i k}}{1 + w^*} - \frac{y_i y_r}{(1 + w^*)^2} \frac{y_k}{w^*}
\]
 and 
 \[ (b^*)^{ rs} = \delta_{rs} - \frac{y_{r} y_s}{w^* (1 + w^*)}.
\]
Thus, by using the convexity of $u^*$, we conclude that
\begin{equation}\label{uk}
|\mathcal{L}^* u^*_k| \leq C,
\end{equation}
in view of the fact that $|\sum_{p,q}F^{*ij} w^*  b^*_{s p}u^*_{pq} b_{qj}^*|$ is uniformly bound for $1\leq i,s\leq n$.
Then we have
\begin{equation}\label{L*H}
\mathcal{L}^* H^* =\sum_k h^*_{p_k} \mathcal{L}^* u^*_k + \sum_{k,l,p,q}h^*_{p_k p_l}F^{*ij} w^* b_{i p}^* u^*_{ pk} u^*_{q l} b_{qj}^*.
\end{equation}
We have the following identity
\[
\sum_{p,q}F^{*ij} w^* b_{i p}^* u^*_{ pk} u^*_{q l}b_{qj}^*=\sum_{p,q,s,n}F^{*ij} w^* b_{i p}^* u^*_{p s} b_{s t}^* (b^*)^{tk} (b^*)^{lm} b^*_{mn} u^*_{ nq} b_{ qj}^*.
\]
From this, we can derive
\begin{equation}\label{ukk}
\left| \sum_{k,l,p,q}h^*_{p_k p_l} F^{*ij} w^* b_{i p}^* u^*_{ pk} u^*_{q l} b_{q j}^* \right| \leq C \sum_i F^{*ii} \lambda_i^2,
\end{equation}
where $\lambda_1,\cdots,\lambda_n$ are the curvature radius of $M_u$, which also are eigenvalues of the matrix $(\sum_{k,l}w^* b_{ik}^* u^*_{k l} b_{lj}^*)$.
Consequently,  we obtain
\begin{equation}\label{5.6}
\left| \mathcal{L}^* H^*\right| \leq  C \left(1 + \sum_i F^{*ii} \lambda_i^2\right).
\end{equation}


By using \cite{Urbas2001}, we know that, for any $\epsilon>0$,
\[
\sum_i F^{*ii} \lambda_i^2 \leq \left( C(\epsilon) + \epsilon M \right) \sum_i F^{*ii},
\]
where $C(\epsilon)$ is a constant depending only on $\epsilon$ and
\[
M = \sup_{y\in\Omega^*}\{\lambda_1(y),\cdots,\lambda_n(y)\}.
\]
For any boundary point $\hat{y}\in\p\Omega^*$, as discussed  in Section 4, we can define a neighborhood  $\Omega^*_r$ of $\hat{y}$ and barrier function $\varphi^*$  using \eqref{Omega*} and \eqref{boundary*}.  For positive constants $B$,
we consider the following test function:
\[
\Phi = H^* -B \left( C(\epsilon) + \epsilon M \right) \varphi^*
\]
in  $\Omega^*_r$. Since $-D^2\varphi^*\geq CI$, by using \eqref{5.6}, we have
\begin{align*}
		\mathcal{L}^* \Phi &\geq \mathcal{L}^*H^* + BC \left( C(\epsilon) + \epsilon M \right) F^{*ij} w^* b_{ik}^* b_{kj}^*\\
	&\geq -C \left( C(\epsilon) + \epsilon M \right) \sum_i F^{*ii} + BC \left(C(\epsilon) + \epsilon M \right)\sum_i F^{*ii}\\
	&\geq 0.
\end{align*}
Moreover, if $r$ is sufficiently small, we have $\Phi\leq 0$ on $\p\Omega^*$ and $\Phi(y_0)=0$. It follows that $\Phi$ achieves its maximum value at $\hat{y}$. Consequently, at $\hat{y}$,  we get
\[
D_{\nu^*} H^* \leq C \left( C(\epsilon) + \epsilon M \right).
\]
Since $\hat{y}$ is arbitrary, we conclude that
\begin{equation}\label{nn}
0 \leq D_{\beta \beta} u^* \leq C(\epsilon) + \epsilon M \quad \text{on} \quad \partial \Omega^*.
\end{equation}

\textbf{Step 3}: The Double Tangential Estimates.

 We consider the maximization problem:
\[
\tilde{M}=\max_{y\in \partial \Omega^*, \eta \in T_{y}(\partial \Omega^*), |\eta|=1}(1 + |y|^2) D_{\eta\eta} u^* (y).
\]
Let $y_0\in\p\Omega^*$ and the unit tangential vector $\xi \in T_{y_0}(\partial \Omega^*)$  be the maximum point and direction of the above problem.
Then we have
\[
\tilde{M}=(1+|y_0|^2) D_{\xi\xi}u^*(y_0).
\]
Let $x_0=P^{-1}(y_0)$ and $e_1$ be the unit vector which has the same direction as $dP^{-1}(\xi)$.
Using Proposition \ref{propT}, we can construct a vector field $T$ around $y_0$ with $T(y_0)=\sqrt{1+|y_0|^2}\xi$. Moreover we have  $|T(y)|^2\leq 1+|y|^2$.  Let
\[
T = \tau(T) + \frac{\langle\nu^*,T\rangle}{\langle\beta,\nu^*\rangle} \beta \quad \text{on} \quad \partial \Omega^*,
\]
where $\tau(T)$ is the tangential component of $T$ on the tangential space of $\partial \Omega^*$ and $\nu^*$ is the unit interior normal of $\p\Omega^*$.
Next we can write
\[
\beta = \beta^t + \langle\beta,\nu^*\rangle\nu^*,
\]
where $\beta^t$ is the tangential component of $\beta$ on the tangential space of $\p\Omega^*$.
Then, we get
\[
\tau(T) = T - \langle\nu^*,T\rangle \nu^* - \frac{\langle \nu^*,T\rangle}{\langle\beta,\nu^*\rangle} \beta^t.
\]
Using \eqref{tn}, we have
\begin{equation}\label{B1}
D_{TT} u^* = D_{\tau(T) \tau(T)} u^* + \frac{\langle\nu^*,T\rangle^2}{\langle\beta,\nu^*\rangle^2} D_{\beta \beta} u^*.
\end{equation}
It is clear that
\[
|\tau(T)|^2 = |T|^2 + \langle\nu^*,T\rangle^2 + \left( \frac{\langle\nu^*,T\rangle}{\langle\beta,\nu^*\rangle} \right)^2 |\beta^t|^2 - 2 \langle\nu^*, T\rangle^2 - 2 \frac{\langle\nu^*, T\rangle}{\langle \beta,\nu^*\rangle} \langle T, \beta^t\rangle,
\]
which implies
 \[
|\tau(T)|^2 \leq |T|^2 + C \langle\nu^*, T\rangle^2 - 2 \langle\nu^*, T\rangle \frac{\langle\beta^t, T\rangle}{\langle\beta, \nu^*\rangle}.
\]
We denote $\tilde{T} = \frac{1}{\sqrt{1+|y|^2}}T$, then the above inequality becomes
\[
|\tau(T)|^2 \leq  \left(1+|y|^2\right) \left( |\tilde{T}|^2 + C \langle\nu^*, \tilde{T}\rangle^2 - 2 \langle\nu^*, \tilde{T}\rangle \frac{\langle\beta^t, \tilde{T}\rangle}{\langle\beta, \nu^*\rangle} \right).
\]
Let $\eta=\tau(T)/|\tau(T)|$ be the unit tangential vector field.
Using the above inequality, we get
\[
D_{\tau(T)\tau(T)} u^*(y) \leq  \left( |\tilde{T}|^2 + C \langle\nu^*, \tilde{T}\rangle^2 - 2 \langle\nu^*, \tilde{T}\rangle \frac{\langle\beta^t, \tilde{T}\rangle}{\langle\beta, \nu^*\rangle} \right) \left( 1 + |y|^2 \right)  D_{\eta \eta} u^*(y).
\]
Now, applying equations \eqref{nn}, \eqref{B1} and the definition of $\tilde{M}$, we have
\begin{eqnarray*}
D_{TT} u^* &\leq& \left( |\tilde{T}|^2 + C \langle\nu^*, \tilde{T}\rangle^2 - 2 \langle\nu^*, \tilde{T}\rangle \frac{\langle\beta^t, \tilde{T}\rangle}{\langle\beta, \nu^*\rangle}  \right) \tilde{M}+(C(\epsilon) +\epsilon M)\left( \frac{\langle\nu^*, T\rangle}{\langle\beta, \nu^*\rangle} \right)^2.
\end{eqnarray*}
It implies that on $\p\Omega^*$,
\begin{equation}\label{B2}
\frac{D_{TT} u^*}{\tilde{M}}\leq   |\tilde{T}|^2 + C \langle\nu^*, \tilde{T}\rangle^2 - 2 \langle\nu^*, \tilde{T}\rangle \frac{\langle\beta^t, \tilde{T}\rangle}{\langle\beta, \nu^*\rangle} + \frac{C}{\tilde{M}}(C(\epsilon) +\epsilon M)\langle\nu^*, \tilde{T}\rangle^2 .
\end{equation}
Here we have used the fact that $\langle\beta,\nu^*\rangle$ has positive lower bound on $\p\Omega^*$.
Next we can define a function near the tubular neighborhood of $\p\Omega^*$,
\begin{equation}\label{funw}
w = \frac{D_{TT} u^*}{\tilde{M}} + 2 \langle\nu^*, \tilde{T}\rangle \frac{\langle\beta^t, \tilde{T}\rangle}{\langle\beta, \nu^*\rangle}- AH^*,
\end{equation}
where $A$ is a positive constant to be determined.
 At the point $y=y_0$, since $T(y_0)=\sqrt{1+|y_0|^2}\xi$, according to the definition of $\tilde{M}$, we find
\[
\tilde{T} = \xi,    \quad \text{and} \quad D_{TT} u^* = \tilde{M},
\]
which implies
\begin{equation}\label{B21}
w(y_0) = 1.
\end{equation}
Here we have used $h^*=0$ on $\p\Omega^*$.

On the other hand, for any $y \in\p\Omega^*$ and $y\neq y_0$, by \eqref{B2}, we have
\begin{equation}\label{B3}
w \leq |\tilde{T}|^2 + C \left( 1 + \frac{C (\epsilon) + \epsilon M}{\tilde{M}} \right) \langle\nu^*, \tilde{T}\rangle^2.
\end{equation}
Since $|T(y)|^2\leq 1+|y|^2$, we have  $|\tilde{T}|^2 \leq 1$.  Moreover, since the function $\langle \nu^*,\tilde{T}\rangle$ is smooth, for $y\in\p\Omega^*$, we obtain
\begin{equation}\label{B4}
|\langle\nu^*, \tilde{T}\rangle(y)| = |\langle\nu^*, \tilde{T}\rangle(y) - \langle\nu^*, \tilde{T}\rangle (y_0)| \leq C |y-y_0| \quad \text{near} \quad y_0.
\end{equation}
Combining \eqref{B3} with \eqref{B4}, we conclude that
\begin{equation}\label{B5}
w \leq 1 + C \left( 1 + \frac{C (\epsilon) + \epsilon M}{\tilde{M}} \right) |y-y_0|^2.
\end{equation}
By \eqref{5.1}, \eqref{nn} and the definitions of $M$ and $\tilde{M}$, we have
\begin{equation}\label{MM}
M \leq C \left( C(\epsilon)+\epsilon M + \tilde{M} \right).
\end{equation}
Assuming $\tilde{M} \geq 1$
, we can derive from \eqref{B5} and the inequality above that, for any $y \in \partial \Omega^*$,
\begin{equation}\label{ww}
w \leq 1 + C_1 (\epsilon) \left| y - y_0 \right|^2 \quad \text{near} \,\  y_0,
\end{equation}
if $\epsilon$ is sufficiently small, where $C_1(\epsilon)$ is a constant only depending on $\epsilon$.

As in Section 4, we also can define a neighborhood  $\Omega^*_r$ of $y_0$ and barrier function $\varphi^*$ by using \eqref{Omega*} and \eqref{boundary*}. Then we define a test function:
\[
\Psi = w - C_1(\epsilon) |y - y_0|^2 - B_1(C(\epsilon) +\epsilon M)\varphi^*,
\]
where $B_1$ is some undermined positive constant.  Thus, by using \eqref{B21} and $\varphi^*(y_0)=0$, we have $$\Psi(y_0) = 1.$$
Moreover, since $\varphi^*\geq 0$ on $\p\Omega^*_r\cap \p\Omega^*$ for $r$ sufficiently small and  using \eqref{ww}, on $\p\Omega^*_r\cap \p\Omega^*$, $\Psi$ achieves its maximum value at $y_0$.
On $\p\Omega_r^*\cap\Omega^*$, $\varphi^*$ has a positive lower bound. Thus, if $B_1$ is chosen sufficiently large, we have $\Psi\leq 1$ on $\p\Omega_r^*\cap\Omega^*$.

Now let's calculate $\Psi$ in $\Omega^*_r$. Using the notation defined by \eqref{L*}, we have
\begin{equation}\label{L1}
\mathcal{L}^* (|y - y_0|^2) = 2 \sum_kF^{*ij} w^* b_{ik}^* b_{kj}^* \leq C \sum_i F^{*ii}, \quad
\text{and}\quad
\mathcal{L}^* (-\varphi) \geq \theta \sum_i F^{*ii},
\end{equation}
where $\theta$ is a positive constant. Next we define $\chi$ as follows: $$\chi = 2 \langle\nu^*, \tilde{T}\rangle \frac{\langle\beta^t, \tilde{T}\rangle}{\langle\beta, \nu^*\rangle}.$$ Then $\chi(y,p) $ is a smooth function depending on $y$ and $p=D u^*$. Thus, by \eqref{funw}, we have
\begin{equation}\label{L2}
\mathcal{L}^* w = \frac{1}{\tilde{M}} \mathcal{L}^*D_{TT} u^* +  \mathcal{L}^* \chi - A \mathcal{L}^* H^*.
\end{equation}
We will now compute each term in $\mathcal{L}^* w$
individually. It is clear that
\[
D_{TT} u^* = T^2 u^* - (D_T T) u^*.
\]
Moreover we note that
\begin{eqnarray*}
\begin{aligned}
w^*T\frac{u^*}{w^*}=&\;Tu^*-\frac{u^*}{w^*}Tw^* \text{ and } \\ w^*T^2\frac{u^*}{w^*}=&\;T^2u^*-2Tu^*\frac{Tw^*}{w^*}-u^*\frac{T^2w^*}{w^*}+2u^*\frac{(Tw^*)^2}{(w^*)^2}.
\end{aligned}
\end{eqnarray*}
Therefore, by using \eqref{T2} and the above two equalities, we get
\begin{align*}
	\mathcal{L}^* (D_{TT} u^*) &\geq \mathcal{L}^* \left(2Tu^*\frac{Tw^*}{w^*}+u^*\frac{T^2w^*}{w^*}-2u^*\frac{(Tw^*)^2}{(w^*)^2}-D_T T (u^*)\right)\\
	&\qquad +T^2\psi^*+2\psi^*_{u^*}T\psi^*Tu^*+\psi^*_{u^*}T^2u^*+\psi^{*}_{u^*u^*}(Tu^*)^2\\
	&\geq\mathcal{L}^* \left(2Tu^*\frac{Tw^*}{w^*}-D_T T (u^*)\right)+\psi^*_{u^*}T^2u^*-C\sum_iF^{*ii}-C.
\end{align*}
We assume
\[
D_T T-2 \frac{Tw^*}{w^*} T= \sum_k t_k \frac{\partial}{\partial y_k}.
\]
Thus we get
\begin{align*}
\mathcal{L}^*\left(D_TTu^*-2 \frac{Tw^*}{w^*} Tu^*\right)&=\mathcal{L}^* \left( \sum_{k} t_k u^*_k \right) \\
&= \sum_{k,p,q} F^{*ij} w^* b^*_{ip} (t_k u^*_k)_{pq} b^*_{qj}\\
&= \sum_{k}t_k \mathcal{L}^* u^*_k + 2 \sum_{k,p,q} F^{*ij} w^* b^*_{ip} D_pt_kD_{kq}u^* b_{qj}^* \\
&+ \sum_{k,p,q} F^{*ij} w^* b^*_{ip} (t_{k})_{pq} u^*_kb_{qj}^*\\
&\leq C + C \sum_i F^{*ii}+ 2 \sum_{k,p,q} F^{*ij} w^* b^*_{ip} D_pt_s (b^*)^{sr}(b^*)_{rk}D_{kq}u^* b_{qj}^*\\
&\leq C + C \sum_i F^{*ii},	
\end{align*}
where we have used \eqref{uk} and
$$\left|\sum_{k,q} F^{*ij} w^*(b^*)_{rk}D_{kq}u^* b_{qj}^*\right|\leq C$$
for any $1\leq i,r\leq n$.
Thus, by combining the above four formulae and $D_{TT}u^* >0$, we obtain
\begin{equation}\label{L3}
\mathcal{L}^* (D_{TT} u^*) \geq - C - C \sum_i F^{*ii}+\psi^*_{u^*}D_{TT}u^*\geq - C - C \sum_i F^{*ii}.
\end{equation}
Here we have used $\psi^*_{u^*}=-\frac{\psi_z}{\psi^2}\geq 0$ in view of \eqref{psi*} and \eqref{Cond1}.
A straightforward calculation shows
\begin{align*}
	\chi_{i} &= \chi_{y_i} + \sum_s\chi_{p_s} u^*_{si},\\
	\chi_{ij} &= \chi_{y_iy_j} +\sum_s \chi_{y_i p_s} u^*_{sj} + \sum_s\chi_{p_s y_j}u^*_{si}+ \sum_{s,t}\chi_{p_sp_t} u^*_{si}u^*_{tj}+\sum_s\chi_{p_s}u^*_{sij}.
\end{align*}
Then, using \eqref{uk} and \eqref{ukk}, we have
\begin{align}	\label{L4}
	\mathcal{L}^* \chi &\geq -C \sum_i F^{*ii} + 2 \sum_{k,l,s}F^{*ij} w^* b^*_{ik} \chi_{y_kp_s} u^*_{sl} b_{lj}^*\\
	&+ \sum_{s,t,k,l} \chi_{p_sp_t} F^{*ij} w^* b^*_{ik}  u^*_{sk} u^*_{tl}b^*_{lj} - C\nonumber\\
	& \geq-C \sum_i F^{*ii} - C-C \sum_{s,k,l} F^{*ij} w^* b^*_{ik}  u^*_{sk} u^*_{sl}b^*_{lj} .\nonumber
\end{align}
On the other hand,  by using \eqref{L*H}, \eqref{uk} and the concavity of $h^*$, we get
	\begin{equation}\label{L5}
\mathcal{L}^* (-H^*) \geq \hat{\theta} \sum_{s,k,l}F^{*ij} w^* b^*_{ik} u^*_{ks} u^*_{sl} b^*_{lj} - C,
	\end{equation}
where $\hat{\theta}$ is a small positive constant. Thus, if $A, B_1$ are sufficiently large, combining \eqref{L1}-\eqref{L5}, we obtain
	\[
\mathcal{L}^*  \Psi \geq 0.
	\]
Thus we deduce that $\Psi$ achieves its maximum value at $y_0$. Therefore we have
	\[
	D_\beta w(y_0) \leq C(C(\epsilon) + \epsilon M).
	\]
This leads to
	\[
	D_{TT\beta} u^*(y_0) \leq C(C(\epsilon) + \epsilon M) \tilde{M},
	\]
which implies
	\begin{equation}\label{LL1}
	D_{\xi\xi \beta} u^*(y_0) \leq C(C(\epsilon) + \epsilon M) \tilde{M},
	\end{equation}
in view of $T(y_0)=\sqrt{1+|y_0|^2}\xi$.
	
By the boundary condition $h^*(Du^*) = 0$, at $y_0$, we take twice derivatives with respect to $\xi$, then we get
	\begin{equation}\label{LL2}
 D_{\xi\xi \beta} u^* +  \sum_{k,l}h^*_{p_kp_l} D_{\xi k} u^* D_{\xi l} u^* +II(\xi,\xi)D_{\nu^* \beta}u^*=0,\end{equation}
 where $II$ is the second fundamental form of $\p\Omega^*$ with respect to $\nu^*$.
Combining \eqref{LL1} with \eqref{LL2}, we get
 \[ - h^*_{p_kp_l} D_{\xi k} u^* D_{\xi l} u^* \leq C(C(\epsilon) + \epsilon M) \tilde{M}.
	\]
Therefore, using the concavity of $h^*$, we have
\[
(D_{\xi\xi} u^*)^2 \leq C \left( C(\epsilon) + \epsilon M \right) \tilde{M}.
\]
According  to the definition  of $\tilde{M}$, we get
\[
\tilde{M} \leq C \left( C(\epsilon) + \epsilon M \right).
\]
Combining \eqref{MM} and choosing $\epsilon$ sufficiently small , we conclude that

\[
M\leq C(\epsilon) .
\]
This completes the proof.


\vspace{5mm}

\end{document}